\documentclass[a4paper, 11pt]{amsart}

\usepackage{microtype}
\usepackage{amsthm,amsfonts,amsmath,amssymb}
\usepackage[alphabetic,initials,nobysame]{amsrefs}
\usepackage{mathtools} 
\usepackage{pxfonts}
\usepackage{euscript}
\usepackage[utf8]{inputenc}
\usepackage{bbold,bbm}
\usepackage{hyperref}
\hypersetup{
    colorlinks,
    linkcolor={red!50!black},
    citecolor={blue!50!black},
    urlcolor={blue!80!black}
}
\usepackage{graphics}
\usepackage{epstopdf} 
\usepackage{float}
\usepackage{graphicx}
\usepackage{comment}
\usepackage[most]{tcolorbox}
\usepackage{scalerel}
\usepackage[all,2cell,arrow,matrix,tips]{xy} \UseAllTwocells \SilentMatrices

\newtheorem{thm}{Theorem}[section]
\newtheorem{lem}[thm]{Lemma}
\newtheorem{prop}[thm]{Proposition}
\newtheorem{cor}[thm]{Corollary}

\newtheorem{prop-defn}[thm]{Proposition-Definition}

\theoremstyle{definition}
\newtheorem{defn}[thm]{Definition}

\newtheorem{exam}[thm]{Example}
\newtheorem{rem}[thm]{Remark}

\theoremstyle{remark}


\newcommand\Cb {\mathbb{C}}


\newcommand\BB {\mathbf{B}}


\newcommand\CC {\EuScript{C}}
\newcommand\CD {\EuScript{D}}

\newcommand\CM {\EuScript{M}}


\newcommand{\one}{\mathbb{1}}

\DeclareMathOperator{\Id}{Id}
\DeclareMathOperator{\Tr}{Tr}
\DeclareMathOperator{\Fun}{Fun}
\DeclareMathOperator{\bLMod}{\mathbf{LMod}}

\DeclareMathOperator{\bRMod}{\mathbf{RMod}}
\DeclareMathOperator{\sbRMod}{\mathbf{sRMod}}
\DeclareMathOperator{\bBMod}{\mathbf{BiMod}}

\DeclareMathOperator{\vecc}{\mathbf{Vec_{\textrm{f.d.},\mathbb{C}}}}
\DeclareMathOperator{\hilbc}{\mathbf{Hilb_{\textrm{f.d.},\mathbb{C}}}}

\DeclareMathOperator{\ev}{ev}
\DeclareMathOperator{\op}{op}

\newcommand{\mref}[1]{(\ref{#1})}
\newcommand{\vertiii}[1]{{\left\vert\kern-0.25ex\left\vert\kern-0.25ex\left\vert #1 
    \right\vert\kern-0.25ex\right\vert\kern-0.25ex\right\vert}}
\DeclarePairedDelimiterX\braket[2]{\langle}{\rangle}{#1 \delimsize\vert #2}

\begin{document}

\title{Separable algebras in multitensor C$^*$-categories are unitarizable}

\author{Luca Giorgetti$^{1}$, Wei Yuan$^{2,3}$, XuRui Zhao$^{4}$}

\address{1 Dipartimento di Matematica, Universit\`a di Roma Tor Vergata\\
Via della Ricerca Scientifica, 1, I-00133 Roma, Italy}
\email{giorgett@mat.uniroma2.it}

\address{2 Institute of Mathematics, Academy of Mathematics and Systems Science \\
Chinese Academy of Sciences, Beijing, 100190, China}

\address{3 School of Mathematical Sciences, University of Chinese Academy of Sciences \\
Beijing 100049, China}
\email{wyuan@math.ac.cn}

\address{4 School of Mathematical Sciences, University of Chinese Academy of Sciences \\
Beijing 100049, China}
\email{zhaoxurui16@mails.ucas.ac.cn}

\begin{abstract}
   Recently, S. Carpi et al. (Comm. Math. Phys., 402:169--212, 2023) proved that every connected (i.e. haploid) Frobenius algebra in a tensor C$^*$-category is unitarizable (i.e. isomorphic to a special C$^*$-Frobenius algebra).
   Building on this result, we extend it to the non-connected case by showing that an algebra in a multitensor C$^*$-category is unitarizable if and only if it is separable. 
\end{abstract}

\subjclass[2020]{} 
\keywords{multitensor C$^*$-category, separable algebra, unitarily separable algebra, C$^*$-Frobenius algebra, Q-system}
\thanks{Research by the first author (L.G.) is supported by \lq\lq MIUR Excellence Department Project MatMod@TOV" awarded to the Department of Mathematics, University of Rome Tor Vergata, CUP E83C23000330006, and by the University of Rome Tor Vergata funding OAQM, CUP E83C22001800005, by progetto GNAMPA 2023 \lq\lq Metodi di Algebre di Operatori in Probabilit\`a non Commutativa" CUP E53C22001930001, and by progetto GNAMPA 2024 \lq\lq Probabilit\`a Quantistica e Applicazioni" CUP E53C23001670001. Research by the second author (W.Y.) is supported by the NSFC under grant numbers 11971463, 11871303, 11871127.}

\maketitle

\section{Introduction} 

Separable algebras in tensor categories are a natural generalization of finite-dimensional (associative unital) semisimple algebras over $\mathbb{C}$. Let $\CC$ be a tensor category, see e.g. \cite{Mue10tour}, \cite{EGNO15}. If $\CC$ happens to be in addition unitary i.e. C$^*$, see e.g. \cite{NeTuBook}, \cite{BKLR15}, the main result of this note, Theorem \ref{thm:isomQsys_iff_sep}, states that every separable algebra is \lq\lq unitarizable" i.e. it is isomorphic to a \lq\lq unitarily\rq\rq\ separable algebra, and the converse holds trivially. For the precise notions see Definition \ref{def:alg_CstarFrob_special}, Definition \ref{def:alg_separable}, and Definition \ref{def:specialCstarFrob_unitarilysep_alg}. By Theorem \ref{thm:isomQsys_iff_sep}, every statement involving separable algebras living in a tensor or multitensor C*-category has a \lq\lq unitary" counterpart.

On the one hand, unitarily separable algebras also appear in the literature under the name of special C$^*$-Frobenius algebras \cite{BKLR15} or Q-systems \cite{Lon90}, \cite{Lon94}, \cite{LoRo97}. Their study was initially motivated by the applications to operator algebras, in particular to the construction and classification of finite-index subfactors \cite{Jon83}, \cite{Ocn88}, \cite{Pop90}, \cite{Pop95}, \cite{JonPA}. See \cite{EvKaBook} for an introduction to the subject, \cite{Gio22} for an overview, and \cite{AMP23}, and references therein, for recent classification results. Since \cite{LoRe95}, Q-systems also play a pivotal role in the construction and classification of finite-index extensions of algebraic quantum field theories \cite{HaagBook} in arbitrary spacetime dimensions, and of one-dimensional conformal field theories in the (completely) unitary vertex operator algebra framework \cite{KacBook}, \cite{CKLW18} as well, since \cite{Gui22}. Recently, Q-systems have been employed in the study of \lq\lq quantum symmetries" (tensor category actions, generalizing ordinary group symmetries) of C$^*$-algebras \cite{CHPJP22}, \cite{ChPe22}, \cite{CHPJ24}, \cite{EvGP23}.

On the other hand, separable algebras have a priori no inbuilt unitarity. Together with an additional commutativity assumption with respect to a given braiding, since \cite{DMNO13} they are also often called \'etale algebras. These objects, typically assuming connectedness, are studied in relation to Ocneanu's quantum subgroups \cite{Ocn02}. See \cite{Gan23-arxiv} for recent results and a detailed account on their classification program. As for (commutative irreducible) Q-systems in the algebraic quantum field theory framework, connected \'etale algebras can be used to describe (local irreducible) extensions of vertex operator algebras \cite{HKL15}, see also \cite{KO02}, \cite{CKM17-arxiv}. Notably, they describe all rational 2D conformal field theories maximally extending a given tensor product of (isomorphic) chiral subtheories. See \cite{FRS02}, \cite{FRS04}, \cite{FRS04II}, \cite{FRS05}, \cite{RFFS07} in the Euclidean setting, \cite{HuKo07}, \cite{Kon07} in the full vertex operator algebra setting, \cite{BKL15}, \cite{BKLR16} for the algebraic quantum field theory setting, and \cite{AGT23-online} for the Wightman quantum field theory setting. See also \cite{KYZ21} for a proof of functoriality of the \cite{FRS02} construction when varying the given chiral subtheory.

The proof of our main result, Theorem \ref{thm:isomQsys_iff_sep}, strongly relies on Theorem 3.2 in \cite{CGGH23}. In the connected (i.e. haploid) case, the notions of separable algebra, Frobenius algebra, and isomorphic to unitarily separable algebra (i.e. isomorphic to special C$^*$-Frobenius algebra = Q-system) all coincide by Lemma \ref{lem:con_sep_alg_Frob} below and by Theorem 3.2, see also Remark 3.3, in \cite{CGGH23}. In the non-connected case, we first decompose a separable algebra $A$ in $\CC$ into indecomposable ones, Lemma \ref{lem:sep_alg_sum_ind}, then unitarize the category of right $A$-modules in $\CC$, Lemma \ref{lem:sep_alg_unitary_module}, lastly we show that the unitarized category is equivalent to the modules over a unitarily separable algebra in $\CC$ to which $A$ is isomorphic, Proposition \ref{prop:internal_C_*_Frob_alg}. This leads to Theorem \ref{thm:isomQsys_iff_sep}

We point out that the semisimplicity of $\CC$ (or better of the tensor subcategory generated by $A$) is a consequence of the assumptions made in Theorem 3.2 in \cite{CGGH23} (where the tensor unit of $\CC$ is simple). Here, we need semisimplicity of $\CC$ to exploit the separability of $A$ via Proposition \ref{prop:sep_alg_bimod_semi}. Thus, a possible generalization of Theorem \ref{thm:isomQsys_iff_sep} to the case of non-semisimple monoidal C$^*$-categories $\CC$ should require a different idea, possibly \lq\lq internal" to the C$^*$-algebra $\CC(A,A)$, on how to show directly that a separable algebra is isomorphic in $\CC$ to a unitarily separable one.

\section{Preliminaries}

A \textbf{C$^*$-category} is a generalization of a C$^*$-algebra of operators acting between different Hilbert spaces instead of one. The objects $X,Y,Z, \ldots$ of $\CC$ can be thought of as the Hilbert spaces, the morphisms $f,g,h,\ldots$ of $\CC$ as the bounded linear operators. Formally, it is a $\Cb$-linear category $\CC$ (\cite{Mac98}, \cite{EGNO15}) equipped with an involutive contravariant anti-linear endofunctor $*: \CC \to \CC$ (sometimes called \textit{dagger} or \textit{adjoint}) and a family of norms $\|\cdot\|$ on morphisms such that
\begin{itemize}
    \item the endofunctor $*$ is the identity on objects (we use $f^*\in\CC(Y,X)$ to denote the image of the morphism $f\in\CC(X,Y)$),
    \item the hom space $\CC(X,Y)$ is a Banach space for every $X, Y \in \CC$, 
    \item $\|gf\| \leq \|g\|\|f\|$, $\|f^*f\| = \|f\|^2$, $f^*f \geq 0$, for every $f \in \CC(X, Y)$, $g \in \CC(Y, Z)$.
\end{itemize}
In particular, a C$^*$-category with one object is a unital C$^*$-algebra (see \cite{GLR85}).

In the following, we use $1_X$ to denote the identity morphism in $\CC(X, X)$. For a morphism $f\in\CC(X,Y)$ we will occasionally write $f:X\to Y$ if the environment category $\CC$ is clear from the context.

A morphism $f$ in a C$^*$-category is called \textit{unitary} (resp. \textit{self-adjoint}) if $f^* = f^{-1}$ (resp. $f^* = f$). Let $\CC$ and $\CD$ be two C$^*$-categories. A \textbf{$*$-functor} from $\CC$ to $\CD$ is a linear functor such that $F(f^*) = F(f)^*$ for every morphism $f$. 

A \textbf{multitensor C$^*$-category} is an abelian rigid (\cite{DoRo89}, \cite{LoRo97}) monoidal category $(\CC, \otimes: \CC \times \CC \to \CC, \one)$ equipped a C$^*$-category structure satisfying the following conditions
\begin{itemize}
    \item the tensor unit $\one$ of $\CC$ is semisimple, i.e. $\CC(\one,\one)$ is finite-dimensional,  
    \item $\otimes$ is a bilinear functor and $(f \otimes g)^* = f^* \otimes g^*$ for every morphisms $f, g$,
    \item the associator and the left/right unitor constraints are unitary.
\end{itemize}
If $\CC(\one, \one) \simeq \Cb$, i.e. if $\one$ is simple, then $\CC$ is called a \textbf{tensor C$^*$-category}. By Proposition 8.16 in \cite{GL19}, every multitensor C$^*$-category $\CC$ is semisimple and locally finite. Moreover, by Mac Lane's coherence theorem, $\CC$ is equivalent to a strict multitensor C$^*$-category, i.e. where the associator and the left/right unitors are identities (see \cite{EGNO15} and \cite{BKLR15}).
From now on, unless otherwise specified, we use $\CC$ to denote a (strict) multitensor C$^*$-category.

\begin{rem}\label{rem:standard_sol}
    The tensor unit $\one$ of $\CC$ is a direct sum of simple objects $\oplus_{i=1}^n \one_i$. Note that $\CC \simeq \oplus_{ij} \CC_{ij}$, where $\CC_{ij} := \one_i \otimes \CC \otimes \one_j$ (see Remark 4.3.4 in \cite{EGNO15}). Let $\tau$ be the linear functional on $\CC(\one, \one)$ defined by 
\begin{align*}
    \tau \left (\sum_{i} a_i 1_{\one_i} \right) := \sum_i a_i.   
\end{align*}
 
    Let $X \in \CC$. We have $X \simeq \oplus_{ij} X_{ij}$ and $\overline{X} \simeq \oplus_{ij} \overline{X}_{ji}$, where $X_{ij}:= \one_i \otimes X \otimes \one_j$ and $\overline{X}$, $\overline{X}_{ij}$ denote the dual (or conjugate) objects of $X$, $X_{ij}$ respectively. Namely, for every $i,j\in\{1,\ldots,n\}$, there exists (see below) a solution $(\gamma_{ij} \in \CC(\one_j , \overline{X}_{ij} \otimes X_{ij}), \overline{\gamma}_{ij} \in \CC(\one_i , X_{ij} \otimes \overline{X}_{ij}))$ of the conjugate equations 
\begin{align*}
    (\overline{\gamma}_{ij}^{\,*} \otimes 1_{{X}_{ij}}) (1_{{X}_{ij}} \otimes {\gamma_{ij}}) = 1_{{X}_{ij}}, \quad ({\gamma}_{ij}^{*} \otimes 1_{\overline{X}_{ij}}) (1_{\overline{X}_{ij}} \otimes {\overline{\gamma}_{ij}}) = 1_{\overline{X}_{ij}},
\end{align*}
which is unique up to unitaries, and such that
\begin{align}\label{equ:standard_sol_1}
    \tau \left (\gamma_{ij}^*(1_{\overline{X}_{ij}} \otimes f) \gamma_{ij} \right) = \tau \left (\overline{\gamma}^{\,*}_{ij}(f \otimes 1_{\overline{X}_{ij}}) \overline{\gamma}_{ij} \right )
\end{align}
for every $f \in \CC(X_{ij}, X_{ij})$. The \textit{scalar dimension} of $X_{ij}$ (\cite{LoRo97}, \cite{GL19}) is then $d_{X_{ij}} = \tau(\gamma_{ij}^*\gamma_{ij}) = \tau(\overline{\gamma}_{ij}^{\,*}\overline{\gamma}_{ij})$. 

    For the convenience of the reader, we sketch proof of this well-known fact when $i \neq j$ (the case where $i = j$ can be proved similarly). Let $\{Z_s\}_s$ be a set of representatives of simple objects in $\CC_{ij}$. Since $\dim \CC(\one_j, \overline{Z}_{s} \otimes Z_{s}) = \dim \CC(\one_i, Z_{s} \otimes \overline{Z}_{s}) = 1$, we can choose a solution of the conjugate equations $(\gamma_{s}, \overline{\gamma}_{s})$ such that $\tau(\gamma_{s}^*\gamma_{s}) = \tau(\overline{\gamma}_{s}^{\,*}\overline{\gamma}_{s})$, i.e. $\|\gamma_{s}\| = \|\overline{\gamma_{s}}\|$ (as in Definition 3.4 in \cite{LoRo97}). For non-simple $X_{ij} \in \CC_{ij}$, let $\{u_{s,k}\}_k$ (resp. $\{\overline{u}_{s, k}\}_k$) be a basis of $\CC(Z_s, X_{ij})$ (resp. $\CC(\overline{Z}_s, \overline{X}_{ij})$) such that $u_{s, l}^* u_{s,k} = \delta_{k, l} 1_{Z_s}$ (resp. $\overline{u}_{s, l}^{\,*} \overline{u}_{s,k} = \delta_{k, l} 1_{\overline{Z}_s}$). Let 
\begin{align*}
    \gamma_{ij} := \sum_{s} \sum_{k} (\overline{u}_{s,k} \otimes u_{s, k}) \gamma_{s}, \quad \overline{\gamma}_{ij} := \sum_{s} \sum_{k} (u_{s, k} \otimes \overline{u}_{s,k}) \overline{\gamma}_{s},
\end{align*}
as before Lemma 3.7 in \cite{LoRo97}, or before Lemma 8.23 in \cite{GL19}, then $(\gamma_{ij}, \overline{\gamma}_{ij})$ is a solution of the conjugate equations that satisfies the equation \mref{equ:standard_sol_1}. Indeed, 
\begin{align*}
    \tau \left (\gamma_{ij}^*(1_{\overline{X}_{ij}} \otimes u_{s,k} u_{s, l}^*) \gamma_{ij} \right) = \delta_{k, l} \tau(\gamma_s^* \gamma_s) = \delta_{k, l} \tau(\overline{\gamma}_s^{\,*} \overline{\gamma}_s) = \tau \left (\overline{\gamma}^{\,*}_{ij}(u_{s,k} u_{s, l}^* \otimes 1_{\overline{X}_{ij}}) \overline{\gamma}_{ij} \right ).
\end{align*}
    Let $(\omega \in \CC(\one , \overline{X}_{ij} \otimes X_{ij}), \overline{\omega} \in \CC(\one , X_{ij} \otimes \overline{X}_{ij}))$ be a solution of the conjugate equations that satisfies the equation \mref{equ:standard_sol_1}. Then there exists an invertible morphism $h \in \CC(X_{ij}, X_{ij})$ such that $\omega = (1_{\overline{X}_{ij}} \otimes h) \gamma_{ij}$ and $\overline{\omega} = ((h^*)^{-1} \otimes 1_{\overline{X}_{ij}})\overline{\gamma}_{ij}$. By choosing a different basis of $\CC(Z_s, X_{ij})$, we may assume that $h = \sum_s \sum_k a_{s, k} u_{s, k}u_{s, k}^*$, where $a_{s,k} > 0$. Then the condition that $(\omega, \overline{\omega})$ fulfills the equation \mref{equ:standard_sol_1} implies that $h = 1_{X_{ij}}$. In other words, the solution of the conjugate equations that satisfies the equation \mref{equ:standard_sol_1} is unique up to unitaries (see Lemma 3.3 and Lemma 3.7 in \cite{LoRo97}, and cf. Lemma 8.35 in \cite{GL19}, for more details).

    Let $\gamma_X := \oplus_{ij} \gamma_{ij}$ and $\overline{\gamma}_X := \oplus_{ij} \overline{\gamma}_{ij}$. Note that these are not the \textit{standard solutions} of the conjugate equations defined in \cite{GL19}, where the Perron--Frobenius data of the \emph{matrix dimension} enter as numerical prefactors for each $i,j$ (see Definition 8.25 and Definition 8.29 therein), unless the tensor unit is simple (as in Section 3 of \cite{LoRo97}) and they coincide with the standard solutions of \cite{LoRo97}. In particular, the \lq\lq loop\rq\rq\ or \lq\lq bubble\rq\rq\ morphisms $\gamma_X^*\gamma_X$ and $\overline{\gamma}_X^{\,*} \overline{\gamma}_X$ will neither be scalar in $\CC(\one,\one)$, nor equal, nor will $(\gamma_X,\overline{\gamma}_X)$ be \emph{spherical} (resp. \emph{minimal}) in the sense of Theorem 8.39 (resp. Theorem 8.44) in \cite{GL19}.
    
With the $(\gamma_X, \overline{\gamma}_X)$ defined above, we have
\begin{align*}
    \left (\gamma_{Y}^* \otimes 1_{\overline{X}} \right) \left (1_{\overline{Y}} \otimes g \otimes 1_{\overline{X}} \right) \left (1_{\overline{Y}} \otimes \overline{\gamma}_{X} \right) = \left (1_{\overline{X}} \otimes \overline{\gamma}_{Y}^{\,*} \right) \left (1_{\overline{X}} \otimes g \otimes 1_{\overline{Y}} \right) \left (\gamma_{X} \otimes 1_{\overline{Y}} \right)
\end{align*}
and 
\begin{align*}
     \tau \left (\gamma_{X}^*(1_{\overline{X}} \otimes hg) \gamma_X \right) = \tau \left (\overline{\gamma}_{X}^{\,*}(hg \otimes 1_{\overline{X}}) \overline{\gamma}_X\right ) = \tau\left (\gamma_{Y}^*(1_{\overline{Y}} \otimes gh) \gamma_Y \right) 
\end{align*}
for every $g \in \CC(X, Y)$, $h \in \CC(Y, X)$, and $X, Y \in \CC$. Moreover, if a solution of the conjugate equations $(\omega \in \CC(\one , \overline{X} \otimes X), \overline{\omega} \in \CC(\one , X \otimes \overline{X}))$ fulfills
\begin{align*}
     \tau \left (\omega^*(1_{\overline{X}} \otimes g \right ) \omega) = \tau \left (\overline{\omega}^{\,*}(g \otimes 1_{\overline{X}}) \overline{\omega} \right ), \quad \forall g \in \CC(X, X),
\end{align*}
then there exists a unitary $u \in \CC(X, X)$ (or $\overline{u} \in \CC(\overline{X}, \overline{X})$) such that $\omega = (1_{\overline{X}} \otimes u) \gamma_X$ and $\overline{\omega} = (u \otimes 1_{\overline{X}})\overline{\gamma}_X$ (or $\omega = (\overline{u} \otimes 1_X) \gamma_X$ and $\overline{\omega} = (1_X \otimes \overline{u})\overline{\gamma}_X$). 

    Based on these observations, it is not hard to check that $\CC$ endowed with the pivotal duality $\{(\overline{X}, \gamma_X, \overline{\gamma}_X)\}_{X \in \CC}$ is a \textit{pivotal category} (see, e.g. Section 1.7 in \cite{TV17} for the definition of pivotal category).
\end{rem}

\section{Algebras and modules in multitensor C$^*$-categories}

We recall below the natural generalization of the notion of finite-dimensional unital associative algebra (in the tensor category of finite-dimensional complex vector spaces $\vecc$). Let $\CC$ be a strict multitensor C$^*$-category.

\begin{defn}
    An \textbf{algebra in $\CC$} is a triple $(A, m, \iota)$, where $A$ is an object in $\CC$, $m \in \CC(A \otimes A , A)$ is the \lq\lq multiplication" morphism, $\iota \in \CC(\one , A)$ is the \lq\lq unit" morphism, fulfilling the associativity and unit laws
\begin{align*}
     m (m \otimes 1_A) = m (1_A \otimes m), \quad m(\iota \otimes 1_A) = 1_A = m(1_A \otimes \iota).
\end{align*}
\end{defn}

\begin{defn}
    Two algebras $(A, m, \iota)$ and $(A', m', \iota')$ in $\CC$ are said to be \textbf{isomorphic} if there is an invertible (not necessarily unitary) morphism $t\in\CC(A,A')$ such that $t m = m' (t \otimes t)$ and $t \iota = \iota'$.
\end{defn}

\begin{defn}\label{def:alg_CstarFrob_special}
    An algebra $(A, m, \iota)$ in $\CC$ is called a \textbf{C$^*$-Frobenius algebra} if $m^*$ is a left (or equivalently right) $A$-module morphism such that
\begin{align}\label{eq:Cstar_Frob}
     (m \otimes 1_A) (1_A \otimes m^*) = m^*m = (1_A \otimes m)(m^* \otimes 1_A).
\end{align}    
    An algebra $(A, m, \iota)$ in $\CC$ is called \textbf{special} if the multiplication is a coisometry:\footnote{or, in a different convention, a scalar multiple of a coisometry, cf. \cite{Mue03-I}, \cite{GS12}, \cite{BKLR15}, \cite{NY18}, \cite{ADC19}. Also, note that we do neither ask $\iota^*\iota$ to be 1, nor a multiple of 1, and that the latter condition is automatic if the tensor unit is simple.}
\begin{align*}
mm^* = 1_A.
\end{align*} 
\end{defn}


\begin{defn}
    Forgetting the C$^*$ structure, an algebra $(A, m, \iota)$ in $\CC$ endowed with a \textbf{coalgebra} structure $(A, \Delta\in\CC(A,A\otimes A), \varepsilon \in \CC(A,\one))$ (not necessarily $\Delta = m^*$, $\varepsilon = \iota^*$) fulfilling the coassociativity and counit laws, is called a \textbf{Frobenius algebra} if the analogue of \eqref{eq:Cstar_Frob} holds with $m^*$ replaced by $\Delta$ (see \cite{Abr99}, \cite{FRS02}, \cite{Yam04FrA}).
\end{defn}

The following crucial results proven in \cite{LoRo97}, \cite{FRS02}, \cite{BKLR15} assuming $\CC(\one,\one) \simeq \Cb$, see in particular Chapter 3 in \cite{BKLR15}, also hold for multitensor C$^*$-categories, cf. Section 2.2 in \cite{GiYu23}:

\begin{prop}\label{prop:algs_and_special_algs}
Let $(A, m, \iota)$ be an algebra in $\CC$. 
    \begin{itemize}
        \item If $(A, m, \iota)$ is special, then it is a C$^*$-Frobenius algebra.
        \item If $(A, m, \iota)$ is a C$^*$-Frobenius algebra, then it is isomorphic to a special one.
    \end{itemize}
\end{prop}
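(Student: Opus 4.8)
The plan is to establish the two implications in turn. For the first, the bulk of the work is structural and reduces to a single diagrammatic identity (the point where the C$^*$-structure really enters); for the second, I would exhibit the isomorphism explicitly by rescaling with a square root of $mm^*$.

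For the implication ``special $\Rightarrow$ C$^*$-Frobenius'', assume $mm^* = 1_A$ and (the case $A = 0$ being trivial) $A \neq 0$, so $\|m\|^2 = \|mm^*\| = \|1_A\| = 1$. Adjointing the associativity and unit laws makes $(A, m^*, \iota^*)$ a coalgebra; in particular $m^*$ is coassociative and $\iota^* \otimes 1_A$ is a left inverse of $m^*$. Put $R := (m \otimes 1_A)(1_A \otimes m^*) \in \CC(A \otimes A, A \otimes A)$. First I would verify, by a short diagrammatic computation using associativity of $m$, coassociativity of $m^*$ and $mm^* = 1_A$, that $R^2 = R$. As $\CC$ is semisimple, $\CC(A \otimes A, A \otimes A)$ is a finite-dimensional C$^*$-algebra, and $\|R\| \leq \|m\|\,\|m^*\| = 1$; an idempotent of norm $\leq 1$ in a C$^*$-algebra is automatically a self-adjoint projection, so $R = R^* = (1_A \otimes m)(m^* \otimes 1_A)$, which is the outer equality in \eqref{eq:Cstar_Frob}. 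Next, $m^*m$ is also a self-adjoint projection (again by $mm^* = 1_A$), and the same ingredients give $R m^* = m^*$ and $m R = m$, whence $R\,(m^*m) = m^*m = (m^*m)\,R$, i.e. $m^*m \leq R$. What remains — the equality $R = m^*m$, equivalently that the range of $R$ is contained in the range of $m^*$, equivalently that $R$ factors through $m$ — is the crux, and I would take it from the cited references, where it is proved by a diagrammatic computation (cf. Chapter~3 of \cite{BKLR15}; \cite{LoRo97}, \cite{FRS02}). Granting $R = m^*m$, ``$m^*$ is a left $A$-module morphism'' is exactly the left equality in \eqref{eq:Cstar_Frob}, the right one follows by taking adjoints, and $(A, m, \iota)$ is a C$^*$-Frobenius algebra. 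I expect this last identity to be the real obstacle: the purely equational data of an associative unital algebra with $mm^* = 1_A$ does not force the Frobenius relations by itself, so positivity of the C$^*$-structure must enter.

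For the implication ``C$^*$-Frobenius $\Rightarrow$ isomorphic to special'', set $a := mm^* \in \CC(A, A)$. Then $a = (m^*)^*m^* \geq 0$, and $a$ is invertible: if $b \in \CC(A,A)$ with $ab = 0$, then $\|m^*b\|^2 = \|(m^*b)^*(m^*b)\| = \|b^*ab\| = 0$, so $m^*b = 0$ and $b = (\iota^* \otimes 1_A)m^*b = 0$ (using $(\iota^* \otimes 1_A)m^* = 1_A$, the adjoint of the unit law); thus $a$ is positive and not a zero-divisor in the finite-dimensional C$^*$-algebra $\CC(A,A)$, hence invertible. Moreover $a$ is central, $m(a \otimes 1_A) = am = m(1_A \otimes a)$, since
\begin{align*}
m(a \otimes 1_A) = m(m \otimes 1_A)(m^* \otimes 1_A) = m(1_A \otimes m)(m^* \otimes 1_A) = m\,(m^*m) = (mm^*)m = am
\end{align*}
by associativity and \eqref{eq:Cstar_Frob}, and symmetrically on the other side. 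Now put $t := a^{1/2}$, which is again positive, invertible and central (by functional calculus), and let $A' := A$, $m' := t\,m\,(t^{-1} \otimes t^{-1})$, $\iota' := t\iota$. Then $t \in \CC(A, A')$ is invertible with $tm = m'(t \otimes t)$ and $t\iota = \iota'$, so $(A', m', \iota')$ is an algebra isomorphic to $(A, m, \iota)$; and since $t = t^*$ we have $(m')^* = (t^{-1} \otimes t^{-1})\,m^*\,t$, whence, using centrality of $a = t^2$,
\begin{align*}
m'(m')^* = t\,m\,(a^{-1} \otimes a^{-1})\,m^*\,t = t\,a^{-2}\,m m^*\,t = t\,a^{-2}\,a\,t = t\,a^{-1}\,t = 1_{A'}.
\end{align*}
So $(A', m', \iota')$ is special, $(A, m, \iota) \cong (A', m', \iota')$, and by the first implication $(A', m', \iota')$ is in fact a special C$^*$-Frobenius algebra, which completes the proof.
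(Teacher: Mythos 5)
Your second implication is complete and correct: rescaling by $t=(mm^*)^{1/2}$, after checking that $a=mm^*$ is positive, invertible and central (your centrality computation uses exactly \eqref{eq:Cstar_Frob}), is the standard normalization argument from Chapter 3 of \cite{BKLR15}, and it has the merit of being written out self-containedly and of working verbatim in the multitensor setting, whereas the paper itself offers no proof at all for this proposition — it only points to \cite{LoRo97}, \cite{FRS02}, Chapter 3 of \cite{BKLR15} (which assume $\CC(\one,\one)\simeq\Cb$) together with Section 2.2 of \cite{GiYu23} for the multitensor extension. The finite-dimensionality of $\CC(A,A)$ that you use for invertibility of $a$ is indeed available (semisimplicity and local finiteness of $\CC$, quoted in the paper from \cite{GL19}).

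The first implication, however, is where your proposal has a genuine gap, and you say so yourself. What you prove — $R:=(m\otimes 1_A)(1_A\otimes m^*)$ is an idempotent of norm at most $1$, hence a self-adjoint projection, hence equal to $(1_A\otimes m)(m^*\otimes 1_A)$, and $m^*m\le R$ — is correct (the norm trick for the outer equality is a nice touch), but it is not the hard part. The identity you defer, $R=m^*m$, is literally the statement that $m^*$ is an $A$-$A$-bimodule morphism, i.e. it \emph{is} the first bullet of the proposition; citing it from \cite{BKLR15}/\cite{LoRo97} means the implication ``special $\Rightarrow$ C$^*$-Frobenius'' has been reformulated (and mildly reduced: only ``$R\le m^*m$'' is left), not proved. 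Your own partial identities show why no further formal manipulation will close it: with $E:=R-m^*m\ge 0$ one only gets $mE=0=Em^*$ and $E(\iota\otimes 1_A)=0=E(1_A\otimes\iota)$, which do not force $E=0$; the known proofs use the C$^*$/positivity structure in an essential, less formal way. Note also a mismatch in your fallback citation: the references you lean on prove the crux for a simple tensor unit, while the proposition is stated for multitensor $\CC$ — this is exactly what the paper's extra pointer to Section 2.2 of \cite{GiYu23} is for, and your text does not address it (your partial arguments are unit-agnostic, but the deferred step is not covered by the simple-unit references alone without that remark).
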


\begin{exam}
Recall, e.g. from Section 2 in \cite{Abr99} and Section 2.1 in \cite{NY18}, that a C$^*$-Frobenius algebra in $\hilbc$, the tensor C$^*$-category of finite-dimensional Hilbert spaces, is just an ordinary finite-dimensional C$^*$-algebra with a Frobenius structure. Forgetting the C$^*$ structure, a Frobenius algebra in the tensor category $\vecc$ of finite-dimensional vector spaces is a finite-dimensional Frobenius algebra.
\end{exam}

We shall use module categories (and their unitary version, C$^*$-module categories recalled below) over multitensor C$^*$-categories. See \cite{Ost03} or Chapter 7 in \cite{EGNO15} for the definitions of module category over a monoidal category $\CC$ and module functor.

\begin{defn}
    A left \textbf{C$^*$-module category} over a multitensor C$^*$-category $\CC$ is a left $\CC$-module category $(\CM, \odot: \CC \times \CM \to \CM)$ which is also a C$^*$-category, such that  
\begin{itemize}
    \item $\odot$ is bilinear and $(f \odot g)^* = f^* \odot g^*$ for every morphisms $f \in \CC$, $g \in \CM$,
    \item the associator and the unitor constraints are unitary.
\end{itemize} 
    Right C$^*$-module categories and C$^*$-bimodule categories are defined similarly. 
\end{defn}

Typical examples of left (resp. right) $\CC$-module categories (not necessarily C$^*$) come from considering right (resp. left) modules over an algebra $(A, m, \iota)$ in $\CC$. We use $\bRMod_{\CC}(A)$ (resp. $\bLMod_{\CC}(A)$) to denote the category of right (resp. left) $A$-modules in $\CC$.

\begin{defn}
Let $(A, m, \iota)$ be a special C$^*$-Frobenius algebra in $\CC$. As for algebras, a right \textbf{$A$-module} $(X, r \in \CC(X\otimes A, X))$ in $\CC$ is called \textbf{special} if
\begin{align*}
rr^* = 1_X.
\end{align*}
We denote by $\sbRMod_{\CC}(A)$ the category of special right $A$-modules in $\CC$. The definition for left $A$-modules is analogous.
\end{defn}
  
By the arguments of Chapter 3 in \cite{BKLR15}, cf. Section 2.2 in \cite{GiYu23}, we have:
 
\begin{prop}\label{prop:special_Cstar_mod}
    Let $(A, m, \iota)$ be a special C$^*$-Frobenius algebra in $\CC$. Then $\sbRMod_{\CC}(A)$ is a left C$^*$-module category over $\CC$, where the involution and norms are inherited from $\CC$. 
    
More generally, given a right $A$-module $\left (X, r \in \CC(X \otimes A, X) \right)$, then $(X, r' := h^{-1} r (h \otimes 1_A))$ is a special right $A$-module, where $h := \sqrt{r r^*}$, and $h^{-1}$ is a right $A$-module isomorphism from $(X, r)$ to $(X, r')$. Moreover, $\bRMod_{\CC}(A)$ is a left C$^*$-module category over $\CC$ with the following C$^*$-structure
\begin{itemize}
    \item $f \in \bRMod_{\CC}(A)(X, Y) \mapsto h_X^2 f^* h_Y^{-2} \in \bRMod_{\CC}(A)(Y, X)$,
    \item $\vertiii{f}:= \left \|h_Y^{-1} f h_X \right \|$, $f \in \bRMod_{\CC}(A)(X, Y)$,
\end{itemize}
where $h_X := \sqrt{r_X r_X^*}$ and $h_Y := \sqrt{r_Y r_Y^*}$ are defined respectively from the right $A$-module actions of $X$ and $Y$.
The embedding $\sbRMod_{\CC}(A) \xhookrightarrow{} \bRMod_{\CC}(A)$ is an equivalence of left C$^*$-module categories.
\end{prop}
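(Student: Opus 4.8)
The plan is to follow the arguments of Chapter~3 in \cite{BKLR15} (cf. Section~2.2 in \cite{GiYu23}), adapted to the multitensor setting, in three steps. \emph{Step 1: the C$^*$-structure on $\sbRMod_{\CC}(A)$.} The crucial input, and the only genuinely non-formal point, is the following \lq\lq module Frobenius identity\rq\rq\ for a special right $A$-module $(X,r)$: using \eqref{eq:Cstar_Frob} and $mm^*=1_A$ one shows that $r^*$ is itself a morphism of right $A$-modules from $(X,r)$ to the free module $(X\otimes A,\,1_X\otimes m)$, i.e. $r^*r=(1_X\otimes m)(r^*\otimes 1_A)$; this is proven exactly as in \cite[Ch.~3]{BKLR15}, the relevant graphical identities being unaffected by the decomposition $\CC\simeq\oplus_{ij}\CC_{ij}$. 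Note that the free module is itself special, $(1_X\otimes m)(1_X\otimes m)^*=1_X\otimes mm^*=1_{X\otimes A}$, and that $r\colon(X\otimes A,1_X\otimes m)\to(X,r)$ is a module morphism by associativity of $r$. Granting the module Frobenius identity, if $f\in\CC(X,Y)$ is a morphism of special modules, $fr=s(f\otimes 1_A)$, then taking adjoints in $\CC$ gives $r^*f^*=(f^*\otimes 1_A)s^*$, whence, composing with $r$ on the left and using $rr^*=1_X$, $f^*=r(f^*\otimes 1_A)s^*$; but the right-hand side is a composite of the module morphisms $s^*\colon(Y,s)\to(Y\otimes A,1_Y\otimes m)$ (module Frobenius identity for $s$), $f^*\otimes 1_A$ (a morphism of free modules), and $r$, so $f^*$ is again a module morphism. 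Hence $\sbRMod_{\CC}(A)(X,Y)$ is a $*$-closed, norm-closed subspace of $\CC(X,Y)$ and the C$^*$-axioms are inherited; since the $\CC$-action $Z\odot(X,r):=(Z\otimes X,1_Z\otimes r)$ preserves specialness ($(1_Z\otimes r)(1_Z\otimes r)^*=1_Z\otimes rr^*=1_{Z\otimes X}$), satisfies $(f\odot g)^*=f^*\odot g^*$, and has unitary associator/unitors inherited from $\CC$, $\sbRMod_{\CC}(A)$ is a left C$^*$-module category over $\CC$.

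\emph{Step 2: rescaling a general module.} Let $(X,r)$ be a right $A$-module. The unit law $r(1_X\otimes\iota)=1_X$ gives $1_X=r(1_X\otimes\iota\iota^*)r^*\le\|\iota\|^2\,rr^*$, so $rr^*\in\CC(X,X)$ is positive and invertible; put $h:=(rr^*)^{1/2}$, an invertible positive element of $\CC(X,X)$. A direct computation using only the unit and associativity laws for $r$ shows that $r':=h^{-1}r(h\otimes 1_A)$ is again a right $A$-module action and that $h^{-1}r=r'(h^{-1}\otimes 1_A)$, so $h^{-1}$ is a right $A$-module isomorphism $(X,r)\to(X,r')$. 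Moreover $(X,r')$ is special: after cancelling the $h^{\pm1}$'s, $r'(r')^*=1_X$ is equivalent to $r\big((rr^*)\otimes 1_A\big)r^*=rr^*$, and writing $(rr^*)\otimes 1_A=(r\otimes 1_A)(r^*\otimes 1_A)$ and using module associativity $r(r\otimes 1_A)=r(1_X\otimes m)$ together with its adjoint $(r^*\otimes 1_A)r^*=(1_X\otimes m^*)r^*$ gives $r\big((rr^*)\otimes 1_A\big)r^*=r(1_X\otimes mm^*)r^*=rr^*$ by $mm^*=1_A$.

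\emph{Step 3: the equivalence and the transported C$^*$-structure.} Define $\Phi\colon\bRMod_{\CC}(A)\to\sbRMod_{\CC}(A)$ by $(X,r)\mapsto(X,r')$ on objects and $f\mapsto h_Y^{-1}f\,h_X$ on $f\in\bRMod_{\CC}(A)(X,Y)$; this is well defined (a composite of module morphisms $(X,r')\to(X,r)\to(Y,s)\to(Y,s')$), fully faithful (inverse on hom spaces $g\mapsto h_Y g\,h_X^{-1}$), surjective on objects, strictly compatible with the $\CC$-actions since $\sqrt{(1_Z\otimes r)(1_Z\otimes r)^*}=1_Z\otimes h$, and it restricts to the identity on $\sbRMod_{\CC}(A)$ since there $h_X=1_X$. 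Transporting the C$^*$-module category structure of $\sbRMod_{\CC}(A)$ through $\Phi$ by $f^{\bullet}:=\Phi^{-1}(\Phi(f)^*)$ and $\vertiii{f}:=\|\Phi(f)\|$ yields precisely $f^{\bullet}=h_X^2f^*h_Y^{-2}$ and $\vertiii{f}=\|h_Y^{-1}f\,h_X\|$, and the C$^*$-identities (e.g. $\vertiii{f^{\bullet}f}=\|(h_Y^{-1}fh_X)^*(h_Y^{-1}fh_X)\|=\vertiii{f}^2$) together with the module-category axioms hold because they do in $\sbRMod_{\CC}(A)$. Finally, the embedding $\sbRMod_{\CC}(A)\hookrightarrow\bRMod_{\CC}(A)$ is a section of $\Phi$, with the isomorphisms $h^{-1}$ providing a natural isomorphism from the identity functor to the embedding composed with $\Phi$; since $h=1_X$ on special modules, the embedding is a $*$-functor ($f^{\bullet}=f^*$) with $\vertiii{\cdot}=\|\cdot\|$, compatible with $\odot$, hence an equivalence of left C$^*$-module categories.

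The step I expect to be the main obstacle is Step~1, specifically the module Frobenius identity for special modules: everything else reduces, as indicated, to $r$ being a split epimorphism and to $mm^*=1_A$ and associativity. The module Frobenius identity is a standard Frobenius-algebra manipulation, carried out in \cite[Ch.~3]{BKLR15} when the tensor unit is simple; in the multitensor case one only has to observe that the corresponding string-diagram identities are fiberwise over $\CC\simeq\oplus_{ij}\CC_{ij}$, so no new idea is needed. Steps~2 and~3 are direct computation and transport of structure.
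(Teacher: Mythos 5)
Your proposal is correct and follows essentially the same route the paper itself takes, namely the Q-system/module arguments of Chapter 3 in \cite{BKLR15} (cf. Section 2.2 in \cite{GiYu23}): the paper gives no independent proof beyond that citation, and your three steps (the module Frobenius identity making $\ast$ preserve module maps between special modules, the rescaling by $h=\sqrt{rr^*}$ using only $mm^*=1_A$, and transport of the C$^*$-structure along $f\mapsto h_Y^{-1}fh_X$) are exactly those arguments, which indeed carry over verbatim to the multitensor setting. The one ingredient you defer, $r^*r=(1_X\otimes m)(r^*\otimes 1_A)$ for special modules, is available as you expect; for instance, $e:=(1_X\otimes m)(r^*\otimes 1_A)$ is an orthogonal projection with $r^*r\le e$ and with the same value under the faithful tracial functional of Remark \ref{rem:standard_sol}, forcing $e=r^*r$.
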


\section{Separable algebras are unitarizable}

In this section, we prove our main theorem.

\begin{defn}\label{def:alg_separable}
An algebra $(A, m, \iota)$ in $\CC$ is called \textbf{separable} if the multiplication $m \in \CC(A \otimes A, A)$ splits as a morphism of $A$-$A$-bimodules in $\CC$, i.e. if there is an $A$-$A$-bimodule morphism $f \in \CC(A, A \otimes A)$ such that $mf = 1_A$. 
\end{defn}

Clearly, every (not necessarily special) C$^*$-Frobenius algebra in $\CC$ is separable. Indeed, by Proposition \ref{prop:algs_and_special_algs}, it is isomorphic to a special algebra in $\CC$ (Definition \ref{def:alg_CstarFrob_special}), namely $mm^* = 1_A$ holds up to isomorphism of algebras, hence it is separable.

Moreover, a special C$^*$-Frobenius algebra, which is also called a \textbf{Q-system} after \cite{Lon94} (see also \cite{LoRo97}, \cite{Mue03-I}, \cite{BKLR15}, \cite{CHPJP22}, \cite{CGGH23} and references therein), can be viewed as a \lq\lq unitarily\rq\rq\ separable algebra. The following definition is motivated by this fact.

\begin{defn}\label{def:specialCstarFrob_unitarilysep_alg}
    A (Frobenius) algebra in $\CC$ is \textbf{unitarizable} if it is (not necessarily unitarily) isomorphic to a special C$^*$-Frobenius algebra in $\CC$. 
\end{defn}

Our main result (Theorem \ref{thm:isomQsys_iff_sep}) states that every separable algebra in $\CC$ is unitarizable. \\

By the proof of Proposition 7.8.30 in \cite{EGNO15}, cf. Section 3 in \cite{Ost03}, Section 2.3 in \cite{DMNO13}, Section 2.4 in \cite{HPT16}, Section 4 in \cite{KoZh17-arxiv}, the following characterization of separability for algebras in (not necessarily C$^*$) multitensor categories holds:

\begin{prop}\label{prop:sep_alg_bimod_semi}
Let $(A, m_A, \iota_A)$, $(B, m_B, \iota_B)$ be separable algebras in $\CC$. Then the categories $\bRMod_{\CC}(A)$, $\bLMod_{\CC}(A)$, and $\bBMod_{\CC}(A|B)$ ($A$-$B$-bimodules in $\CC$) are semisimple. 

In particular, an algebra $(C, m_C, \iota_C)$ in $\CC$ is separable if and only if $\bBMod_{\CC}(C|C)$ is semisimple.
\end{prop}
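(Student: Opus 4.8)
The plan is to prove Proposition \ref{prop:sep_alg_bimod_semi} by combining two standard facts: first, that semisimplicity of $\CC$ (guaranteed for multitensor C$^*$-categories by \cite{GL19}) plus separability of $A$ forces the categories of (one-sided and bimodule) modules to inherit semisimplicity; and second, the converse, that separability of $C$ is \emph{detected} by semisimplicity of $\bBMod_{\CC}(C|C)$ because the identity bimodule sees the splitting of $m_C$.

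For the forward direction, I would first reduce to the bimodule case, since right $A$-modules are the same as $A$-$\one$-bimodules (using that $\one$ is the trivial separable algebra) and similarly for left modules, so it suffices to show $\bBMod_{\CC}(A|B)$ is semisimple when $A,B$ are separable. Given an $A$-$B$-bimodule $M$ and a sub-bimodule $N \subseteq M$ (with idempotent/projection $p\in\CC(M,M)$ onto $N$ as objects of $\CC$, available since $\CC$ itself is semisimple, hence idempotent-complete), the key construction is an averaging operator: using the separability splittings $f_A\in\CC(A,A\otimes A)$ and $f_B\in\CC(B,B\otimes B)$ of $m_A$ and $m_B$ as bimodule maps, one builds from $p$ a new morphism
\begin{align*}
\tilde p := (r_M) \circ (1_A \otimes p \otimes 1_B) \circ (\text{coactions built from } f_A, f_B)
\end{align*}
which one checks is (i) a bimodule morphism and (ii) still a projection onto (a bimodule complement of) $N$, because $m_A f_A = 1_A$, $m_B f_B = 1_B$ make the averaging idempotent and identity-preserving on $N$. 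Concretely this is the argument in the proof of Proposition 7.8.30 in \cite{EGNO15}: the relative tensor product $M\otimes_B N$ type manipulations produce a bimodule-linear section of $N\hookrightarrow M$. Since every sub-bimodule is then a direct summand and the category has finitely many simples (local finiteness of $\CC$ descends to modules), $\bBMod_{\CC}(A|B)$ is semisimple.

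For the converse in the "in particular" clause: if $\bBMod_{\CC}(C|C)$ is semisimple, consider the multiplication $m_C\in\CC(C\otimes C,C)$, which is a surjective morphism of $C$-$C$-bimodules from $C\otimes C$ (free bimodule) onto $C$. In a semisimple category every epimorphism of bimodules splits, so there is a bimodule section $f\in\CC(C,C\otimes C)$ with $m_C f = 1_C$, i.e. $C$ is separable by Definition \ref{def:alg_separable}. I expect the main obstacle to be purely bookkeeping: writing the averaging idempotent $\tilde p$ precisely with the strict associators suppressed, and verifying bimodule-linearity of $\tilde p$ uses associativity of the $A$- and $B$-actions together with the bimodule property of $f_A$, $f_B$ — a routine but notation-heavy string diagram check. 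The conceptual content (semisimple $\Rightarrow$ epis split $\Rightarrow$ separable, and separable $\Rightarrow$ averaging works) is light; only the diagrammatic verification requires care, and I would either draw the string diagrams or cite the cross-references (\cite{Ost03}, \cite{DMNO13}, \cite{HPT16}, \cite{KoZh17-arxiv}) given above for the details.
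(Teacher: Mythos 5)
Your argument is correct and is essentially the same as the paper's, which gives no independent proof but simply invokes the averaging argument of Proposition 7.8.30 in \cite{EGNO15} (and the related references) for the forward direction and the splitting of the epimorphism $m_C$ in the semisimple bimodule category for the converse — exactly the two steps you reconstruct. Apart from a harmless swap of sides in identifying right $A$-modules with $\one$-$A$-bimodules, nothing is missing at the level of detail the paper itself provides.
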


Let $(A, m, \iota)$ be an algebra in $\CC$, $(X, r) \in \bRMod_{\CC}(A)$, and $(Y,l) \in \bLMod_{\CC}(A)$. We recall, e.g. from Section 7.8 in \cite{EGNO15} \textit{tensor product} of $X$ and $Y$ over $A$ is the object $X \otimes_A Y \in \CC$ defined as the co-equalizer of the diagram
\begin{align*}
    \xymatrix{
        X \otimes A \otimes Y \ar@<+.5ex>[r]^{\hspace{2mm} r \otimes 1_Y} \ar@<-.5ex>[r]_{\hspace{2mm} 1_X \otimes l} & X \otimes Y \ar[r] & X \otimes_A Y.
    }
\end{align*}

The following result follows from Proposition 7.11.1 in \cite{EGNO15}.

\begin{prop}\label{prop:equ_mod_fun_bimod}
Let $(A, m_A, \iota_A)$, $(B, m_B, \iota_B)$ be algebras in $\CC$ such that $\bRMod_{\CC}(A)$, $\bRMod_{\CC}(B)$ are semisimple. Then the category $\Fun_{\CC|}(\bRMod_{\CC}(A), \bRMod_{\CC}(B))$ of left $\CC$-module functors is equivalent to $\bBMod_{\CC}(A|B)$. 

The equivalence is given by
    \begin{align*}
        X \mapsto - \otimes_A X: \bBMod_{\CC}(A|B) \to \Fun_{\CC|}\left (\bRMod_{\CC}(A), \bRMod_{\CC}(B) \right) \! . 
    \end{align*}
\end{prop}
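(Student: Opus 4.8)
The plan is to deduce this from Proposition 7.11.1 in \cite{EGNO15}, which identifies left $\CC$-module functors between categories of modules over algebras with bimodules, once one has the right finiteness/exactness hypotheses in place. First I would recall that by Proposition \ref{prop:sep_alg_bimod_semi} the hypotheses here -- semisimplicity of $\bRMod_{\CC}(A)$ and $\bRMod_{\CC}(B)$ -- guarantee in particular that these module categories are finite, abelian, and that every $\CC$-module functor between them is automatically exact (an additive functor between semisimple abelian categories is exact). This is exactly the setting in which the cited EGNO result applies, so the equivalence $\bBMod_{\CC}(A|B) \simeq \Fun_{\CC|}(\bRMod_{\CC}(A), \bRMod_{\CC}(B))$ is available essentially verbatim, with the caveat that \cite{EGNO15} works over an algebraically closed field and with finite tensor categories rather than multitensor C$^*$-categories; one checks that only the $\Cb$-linear abelian and rigid monoidal structure is used in the proof, all of which $\CC$ has.

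Next I would describe the functor realizing the equivalence and its quasi-inverse. Given an $A$-$B$-bimodule $X$, the assignment sends a right $A$-module $M$ to the relative tensor product $M \otimes_A X$, which is a right $B$-module via the right $B$-action on $X$; the left $\CC$-module structure on this functor comes from the associativity constraint $(Z \odot M) \otimes_A X \cong Z \odot (M \otimes_A X)$ for $Z \in \CC$. Here I would note that the relative tensor products exist because $\bRMod_{\CC}(A)$ is semisimple (coequalizers of the defining diagram exist, e.g. one can split the relevant idempotent), which is where separability of $A$ is used. The quasi-inverse sends a module functor $F$ to $F(A)$, which carries a right $B$-module structure by definition and a left $A$-module structure induced from the left $A$-module structure on $A \in \bRMod_{\CC}(A)$ together with $\CC$-module functoriality of $F$; one then verifies the two composites are naturally isomorphic to the identities, the key natural isomorphism being $F(M) \cong M \otimes_A F(A)$, which holds on the generator $A$ tautologically and extends by additivity and $\CC$-linearity since every right $A$-module is a direct summand of an object of the form $Z \odot A$ for some $Z \in \CC$.

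The main obstacle I anticipate is not conceptual but bookkeeping: verifying that the relative tensor product functor $-\otimes_A X$ is genuinely a $\CC$-module functor (coherence of the module-functor constraint with the associativity and unit constraints of $\CC$ and of the module categories), and that these constraints match up under the equivalence so that morphisms of bimodules correspond to $\CC$-module natural transformations. All of this is carried out in the proof of Proposition 7.11.1 in \cite{EGNO15} in the finite tensor category case; since that argument is purely diagrammatic and uses only the structure that a multitensor C$^*$-category provides (abelian, $\Cb$-linear, rigid, with semisimple module categories in place of the finiteness and exactness used there), I would simply invoke it, remarking that the non-simplicity of $\one$ causes no difficulty because the relevant statements are about one-sided module categories and their functors rather than about $\CC$ itself. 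Hence the proof reduces to citing Proposition 7.11.1 in \cite{EGNO15} after checking that Proposition \ref{prop:sep_alg_bimod_semi} supplies its hypotheses.
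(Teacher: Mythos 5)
Your proposal is correct and follows essentially the same route as the paper, which proves this proposition simply by invoking Proposition 7.11.1 in \cite{EGNO15}, with the semisimplicity hypotheses playing the role of the exactness/finiteness assumptions there. Your additional sketch of the quasi-inverse $F \mapsto F(A)$ and the isomorphism $F(M) \simeq M \otimes_A F(A)$ is consistent with that citation and adds detail the paper leaves implicit (only note that existence of the coequalizer defining $\otimes_A$ needs no separability, since $\CC$ is abelian).
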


\begin{defn}
A separable algebra $(A, m_A, \iota_A)$ in $\CC$ is called \textbf{indecomposable} if $\bRMod_{\CC}(A)$ is an indecomposable left $\CC$-module category, i.e. if it is not equivalent to a direct sum of non-zero left $\CC$-module categories. 
\end{defn}

\begin{defn}
An algebra $(A, m_A, \iota_A)$ is called \textbf{connected} (or \textbf{haploid}) if $\dim (\CC(\one, A)) = 1$, i.e. if $A$ is a simple object in $\bRMod_{\CC}(A)$.
\end{defn}

\begin{lem}\label{lem:con_alg_in_tensor}
    Let $\CC \simeq \oplus_{ij} \CC_{ij}$ be the decomposition as in Remark \ref{rem:standard_sol}. Then $(A, m_A, \iota_A)$ is a connected algebra in $\CC$ if and only if there exists exactly one $j \in \{1,\ldots,n\}$ such that $A=A_{jj}$ is a connected algebra contained in the tensor C$^*$-category $\CC_{jj}$ with tensor unit $\one_j$.  
\end{lem}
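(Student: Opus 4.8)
The plan is to exploit the grading $\CC\simeq\bigoplus_{ij}\CC_{ij}$ of Remark \ref{rem:standard_sol}, for which $\otimes$ carries $\CC_{ij}\times\CC_{kl}$ into $\CC_{il}$ when $j=k$ and into $0$ otherwise (cf. Remark 4.3.4 in \cite{EGNO15}), and to read the needed vanishing off the unit laws. First I would decompose the algebra data along the grading. Writing $A\simeq\bigoplus_{kl}A_{kl}$ with $A_{kl}\in\CC_{kl}$, one has $A\otimes A\simeq\bigoplus_{k,l,l'}A_{kl}\otimes A_{ll'}$ with $A_{kl}\otimes A_{ll'}\in\CC_{kl'}$, so $m_A$ splits into components $m_{kl,ll'}\colon A_{kl}\otimes A_{ll'}\to A_{kl'}$; and since $\one=\bigoplus_i\one_i$ with $\one_i\in\CC_{ii}$ and $\CC(\one_i,A_{kl})=0$ unless $i=k=l$, the unit splits as $\iota_A=\bigoplus_i\iota_i$ with $\iota_i\in\CC(\one_i,A_{ii})$.

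Then I would read the unit laws block by block. Using the identifications $\one_i\otimes A_{il'}\simeq A_{il'}$ and $A_{kj}\otimes\one_j\simeq A_{kj}$, the identities $m_A(\iota_A\otimes 1_A)=1_A$ and $m_A(1_A\otimes\iota_A)=1_A$ restrict, on the relevant graded pieces, to $m_{ii,il'}(\iota_i\otimes 1_{A_{il'}})=1_{A_{il'}}$ and $m_{kj,jj}(1_{A_{kj}}\otimes\iota_j)=1_{A_{kj}}$. Consequently $A_{kl}=0$ whenever $\iota_k=0$ or $\iota_l=0$.

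With this in hand, both implications follow. If $A$ is connected, then $1=\dim\CC(\one,A)=\sum_i\dim\CC(\one_i,A_{ii})$, so there is exactly one index $j$ with $\dim\CC(\one_j,A_{jj})=1$ and $\dim\CC(\one_i,A_{ii})=0$ for $i\neq j$; the latter forces $\iota_i=0$ for $i\neq j$ (since $\iota_i\in\CC(\one_i,A_{ii})$), whence $A_{kl}=0$ unless $k=l=j$, i.e. $A=A_{jj}\in\CC_{jj}$. Restricting $m_A$ and $\iota_A$ then makes $A_{jj}$ an algebra in the tensor C$^*$-category $\CC_{jj}$ (with tensor unit $\one_j$), connected there since $\dim\CC_{jj}(\one_j,A_{jj})=\dim\CC(\one_j,A_{jj})=1$; and $j$ is unique, because a connected algebra is nonzero while $A$ lies in the single component $\CC_{jj}$, so $A\neq A_{j'j'}$ for $j'\neq j$. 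Conversely, if $A=A_{jj}$ is a connected algebra in $\CC_{jj}$, then $\CC(\one_i,A)=0$ for $i\neq j$ while $\CC(\one_j,A)=\CC_{jj}(\one_j,A)$ is one-dimensional, so $\dim\CC(\one,A)=1$ and $A$ is connected in $\CC$ (the unit and associativity laws for $A$ in $\CC$ reduce to those in $\CC_{jj}$, since $\one_k\otimes A\simeq 0\simeq A\otimes\one_k$ for $k\neq j$).

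There is no genuine obstacle here: the statement is bookkeeping with the $\bigoplus_{ij}\CC_{ij}$ grading. The one point needing a little care is the block-wise reading of the unit laws, together with the identifications $\one_i\otimes A_{il'}\simeq A_{il'}$ and $A_{kj}\otimes\one_j\simeq A_{kj}$; these rely on the compatibility of $\otimes$ with the decomposition $\CC\simeq\bigoplus_{ij}\CC_{ij}$ and on $\one_i\otimes\one_k\simeq\one_i$ when $i=k$ and $\simeq 0$ when $i\neq k$.
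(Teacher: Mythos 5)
Your proof is correct and follows essentially the same route as the paper's own (brief) argument: use connectedness to isolate the unique index $j$ with $\CC(\one_j,A)\neq 0$, and then observe that all blocks $A_{kl}$ with $(k,l)\neq(j,j)$ must vanish, which you justify carefully via the block-wise unit laws. The paper leaves that last vanishing implicit, so your write-up is just a more detailed version of the same bookkeeping.
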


\begin{proof}
Recall $\one = \oplus_{i=1}^n \one_i$. By connectedness, there is only one $j$ such that $\CC(\one_j, A) \neq 0$, and $\dim(\CC(\one_j, A)) = 1$. Moreover, every $A_{kl}$ must be zero unless $k=l=j$.
\end{proof}

The following result is well-known, we sketch the proof for the reader's convenience:

\begin{lem}\label{lem:sep_alg_sum_ind}
    Let $(A, m, \iota)$ be a separable algebra in $\CC$. Then $A$ is a direct sum of indecomposable separable algebras. 
\end{lem}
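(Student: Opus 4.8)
The plan is to use the semisimplicity of the bimodule category $\bBMod_{\CC}(A|A)$, which holds by Proposition \ref{prop:sep_alg_bimod_semi} since $A$ is separable. First I would observe that $A$, viewed as an $A$-$A$-bimodule over itself via $(m,m)$, decomposes in $\bBMod_{\CC}(A|A)$ as a finite direct sum of indecomposable bimodule summands. Rather than chasing the indecomposable summands directly, the cleaner route is to look at the (finite-dimensional, by local finiteness of $\CC$) C$^*$-algebra — or at least associative algebra — $E := \bBMod_{\CC}(A|A)(A,A)$ of bimodule endomorphisms of $A$. Because $\bBMod_{\CC}(A|A)$ is semisimple and $\Cb$-linear with finite-dimensional hom spaces, $E$ is a finite-dimensional semisimple algebra; moreover $E$ is \emph{commutative}. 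Indeed, for an algebra $A$ with multiplication $m$, a bimodule endomorphism $\phi\in\CC(A,A)$ is determined by $\phi\iota\in\CC(\one,A)$ via $\phi = m(1_A\otimes\phi\iota) = m(\phi\iota\otimes 1_A)$ (using that $\phi$ is a left/right module map and the unit law), and under this identification composition of bimodule endomorphisms corresponds to the convolution-type product $\phi\cdot\psi \leftrightarrow m(\phi\iota\otimes\psi\iota)$, which is symmetric because $m(\phi\iota\otimes\psi\iota) = \phi\psi\iota = \psi\phi\iota = m(\psi\iota\otimes\phi\iota)$ — here $\phi,\psi$ commute as bimodule maps act on opposite sides. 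So $E$ is a finite-dimensional commutative semisimple algebra, hence $E \cong \Cb^{\oplus k}$ for some $k$, and it contains $k$ orthogonal idempotents $e_1,\dots,e_k$ summing to $1_A$.

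Next I would promote these idempotents to a direct-sum decomposition of $A$ as an \emph{algebra}. Each $e_\alpha\in E$ is in particular an idempotent in $\CC(A,A)$; by semisimplicity (idempotent completeness) of $\CC$ we get objects $A_\alpha$ and morphisms $p_\alpha\in\CC(A,A_\alpha)$, $q_\alpha\in\CC(A_\alpha,A)$ with $q_\alpha p_\alpha = e_\alpha$, $p_\alpha q_\alpha = 1_{A_\alpha}$, and $A\simeq\bigoplus_\alpha A_\alpha$. Because $e_\alpha$ is a bimodule endomorphism and a central idempotent in $E$, the multiplication $m$ respects the decomposition: $m(e_\alpha\otimes e_\beta) = 0$ for $\alpha\neq\beta$ (one checks $m(e_\alpha\otimes e_\beta)$ corresponds in the above identification to $e_\alpha e_\beta = 0$, using that $e_\alpha$ is left-linear and $e_\beta$ right-linear so $m(e_\alpha\otimes e_\beta) = e_\alpha e_\beta m$), and $m(e_\alpha\otimes e_\alpha) = e_\alpha m$. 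Hence $m_\alpha := p_\alpha m (q_\alpha\otimes q_\alpha)\in\CC(A_\alpha\otimes A_\alpha, A_\alpha)$ and $\iota_\alpha := p_\alpha\iota\in\CC(\one, A_\alpha)$ make $(A_\alpha, m_\alpha, \iota_\alpha)$ an algebra, and $A$ is the direct sum algebra $\bigoplus_\alpha A_\alpha$ (with $m = \bigoplus m_\alpha$ under the identifications and $\iota = \bigoplus\iota_\alpha$). One then checks that each $A_\alpha$ is again separable: a bimodule splitting $f$ of $m$ restricts to $f_\alpha := (p_\alpha\otimes p_\alpha) f q_\alpha$, which is an $A_\alpha$-$A_\alpha$-bimodule splitting of $m_\alpha$ — here one uses that the $A$-action on $A_\alpha$ factors through the $A_\alpha$-action, i.e. that the $e_\beta$-components act as zero on $A_\alpha$ for $\beta\neq\alpha$.

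Finally I would verify indecomposability of each $A_\alpha$: since $E\cong\Cb^{\oplus k}$ with the $e_\alpha$ the minimal central idempotents, $\bBMod_{\CC}(A_\alpha|A_\alpha)(A_\alpha,A_\alpha) = e_\alpha E e_\alpha \cong \Cb$, i.e. $A_\alpha$ is simple as an $A_\alpha$-$A_\alpha$-bimodule over itself; equivalently, the $\CC$-module category $\bRMod_{\CC}(A_\alpha)$ has simple unit object in the sense needed, which by the standard dictionary (e.g. via Proposition \ref{prop:equ_mod_fun_bimod}, identifying $\CC$-module endofunctors of $\bRMod_{\CC}(A_\alpha)$ with $\bBMod_{\CC}(A_\alpha|A_\alpha)$) forces $\bRMod_{\CC}(A_\alpha)$ to be an indecomposable $\CC$-module category — a decomposition $\bRMod_{\CC}(A_\alpha)\simeq\CM_1\oplus\CM_2$ into nonzero $\CC$-module categories would produce a nontrivial idempotent module endofunctor, contradicting $\mathrm{End}(\Id) = e_\alpha E e_\alpha \cong\Cb$. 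I expect the main obstacle to be the bookkeeping in the second paragraph: carefully justifying that the central idempotent decomposition of the endomorphism algebra $E$ is compatible with the multiplication $m$ so that the $A_\alpha$ are genuinely \emph{algebra} direct summands (not merely object or bimodule summands), and that separability descends to each summand with respect to its own action — this is where one must be most careful about left- versus right-module conventions and about the fact that the ambient $A$-action on $A_\alpha$ coincides with the intrinsic $A_\alpha$-action.
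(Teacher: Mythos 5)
Your proposal is correct and follows essentially the same route as the paper: both exploit Proposition \ref{prop:sep_alg_bimod_semi} and Proposition \ref{prop:equ_mod_fun_bimod} to work with idempotents in the finite-dimensional semisimple algebra $E=\bBMod_{\CC}(A|A)(A,A)$, split them to obtain algebra summands (with multiplication $p_\alpha m(q_\alpha\otimes q_\alpha)$, unit $p_\alpha\iota$) that remain separable, and detect indecomposability through the identification of $\bBMod$-endomorphisms of a summand with a corner of $E$. The only difference is organizational: the paper inducts on $\dim E$ using an arbitrary non-trivial idempotent, while you decompose in one step via a complete orthogonal family of minimal idempotents after observing that $E$ is commutative — a true fact, though your justification of $\phi\psi\iota=\psi\phi\iota$ is circular as written and should instead be derived from the two identities $\phi=m(1_A\otimes\phi\iota)=m(\phi\iota\otimes 1_A)$ together with injectivity of $\phi\mapsto\phi\iota$.
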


\begin{proof}
    Note that $\bRMod_{\CC}(A)$ is indecomposable if and only if the identity functor $\mathrm{id} = - \otimes_A A$ associated with the trivial bimodule $A$ is a simple object in $\Fun_{\CC|}(\bRMod_{\CC}(A), \bRMod_{\CC}(A))$. By Proposition \ref{prop:equ_mod_fun_bimod}, 
\begin{align*} 
\bBMod_{\CC}(A|A)(A, A) \simeq \Fun_{\CC|}(\bRMod_{\CC}(A), \bRMod_{\CC}(A))(\mathrm{id},\mathrm{id}).
\end{align*}
Assume that $\dim (\bBMod_{\CC}(A|A)(A, A)) > 1$. Recall from Proposition \ref{prop:sep_alg_bimod_semi} that $\bBMod_{\CC}(A|A)$ is semisimple. Let $p$ be a non-trivial idempotent in $\bBMod_{\CC}(A|A)(A, A)$, i.e. $1_A - p \neq 0$, $p^2 = p$, and let $B$ be the image of $p$. Then $B$ is a separable algebra with multiplication and unit given by $v m (w \otimes w)$ and $v \iota$, where $v: A \to B$ and $w: B \to A$ are $A$-$A$-bimodule morphisms such that $v w = 1_{B}$ and $w v = p$. Note that $f: B \to B$ is a $B$-$B$-bimodule morphism with the previous algebra structure on $B$ if and only if $w f v: A \to A$ is an $A$-$A$-bimodule morphism. Thus $\dim (\bBMod_{\CC}(B|B)(B, B)) < \dim (\bBMod_{\CC}(A|A)(A, A))$. This implies that $A$ is a direct sum of indecomposable separable algebras. 
\end{proof}

\begin{rem}
If, in addition, the category $\CC$ is \textit{braided} and the separable algebra $(A, m, \iota)$ is \textit{commutative} in the sense of Definition 1.1 in \cite{KO02}, cf. Definition 4.20 in \cite{BKLR15}, then $\bBMod_{\CC}(A|A)$ and $\bRMod_{\CC}(A)$ can be identified. 
Hence, by the previous proof, $A$ is a direct sum of connected separable algebras, cf. Remark 3.2 in \cite{DMNO13}.
\end{rem}

\begin{lem}\label{lem:con_sep_alg_Frob}
    Let $(A,m, \iota)$ be a connected separable algebra in $\CC$. Then $A$ can be promoted to a Frobenius algebra.  
\end{lem}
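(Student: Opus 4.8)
The plan is to reduce to the case of a simple tensor unit and then check that $\varepsilon:=\iota^{*}$ is a Frobenius form, i.e. that the pairing $\iota^{*}\circ m$ on $A$ is non-degenerate; this non-degeneracy will follow almost formally from connectedness, once separability has been used to ensure semisimplicity of the module categories. To begin, I would invoke Lemma \ref{lem:con_alg_in_tensor}: the connected algebra $(A,m,\iota)$ equals $A_{jj}$ for a unique $j$, and $A_{jj}$ is a connected algebra in the tensor C$^{*}$-category $\CC_{jj}$, whose tensor unit $\one_{j}$ is simple. A coalgebra (hence a Frobenius) structure on $A$ inside $\CC_{jj}$ is one inside $\CC$, and the bimodule splitting $f\in\CC(A,A\otimes A)$ witnessing separability already lies in $\CC_{jj}$; so from now on I may assume $\CC(\one,\one)\simeq\Cb$, and I fix a solution $(\gamma_{A}\in\CC(\one,\overline{A}\otimes A),\ \overline{\gamma}_{A}\in\CC(\one,A\otimes\overline{A}))$ of the conjugate equations for $A$.

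By Definition \ref{def:alg_separable} and Proposition \ref{prop:sep_alg_bimod_semi}, the categories $\bRMod_{\CC}(A)$ and $\bLMod_{\CC}(A)$ are semisimple. The right regular module $A_{A}$ is simple by connectedness. The left regular module ${}_{A}A$, being the free left $A$-module on $\one$, satisfies $\mathrm{End}_{\bLMod_{\CC}(A)}({}_{A}A)\simeq\CC(\one,A)$, which is one-dimensional, so ${}_{A}A$ is simple as well. Finally, since conjugation gives an arrow-reversing equivalence between $\bRMod_{\CC}(A)$ and $\bLMod_{\CC}(A)$, the conjugate object $\overline{A}$, with its natural left $A$-module structure, is also simple, its endomorphism algebra being $\simeq\CC(\one,A)^{\op}$.

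Now put $\varepsilon:=\iota^{*}\in\CC(A,\one)$ and $\beta:=\varepsilon\circ m\in\CC(A\otimes A,\one)$. Associativity of $m$ gives $\beta(m\otimes 1_{A})=\iota^{*}m(m\otimes 1_{A})=\iota^{*}m(1_{A}\otimes m)=\beta(1_{A}\otimes m)$, i.e. $\beta$ is an associative pairing, and this is exactly the statement that its mate $\psi:=(1_{\overline{A}}\otimes\beta)(\gamma_{A}\otimes 1_{A})\colon A\to\overline{A}$ is a morphism of left $A$-modules ${}_{A}A\to\overline{A}$. Moreover $\beta\neq 0$: as $\iota\neq 0$ (otherwise $1_{A}=m(\iota\otimes 1_{A})=0$), we have $\beta(1_{A}\otimes\iota)=\iota^{*}m(1_{A}\otimes\iota)=\iota^{*}\neq 0$, so $\psi\neq 0$. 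A nonzero morphism between the simple objects ${}_{A}A$ and $\overline{A}$ of the semisimple category $\bLMod_{\CC}(A)$ is an isomorphism; hence $\psi$ is invertible, i.e. $\beta=\iota^{*}\circ m$ is non-degenerate.

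It remains to recall that a non-degenerate associative pairing $\beta=\varepsilon\circ m$ turns $(A,m,\iota)$ into a Frobenius algebra by the classical construction (see \cite{Abr99}, \cite{FRS02}, \cite{Yam04FrA}): one keeps $\varepsilon=\iota^{*}$ as counit and sets $\Delta:=(m\otimes 1_{A})(1_{A}\otimes\delta)$, where $\delta\in\CC(\one,A\otimes A)$ is the copairing dual to $\beta$; coassociativity, counitality, the identity $\Delta=(1_{A}\otimes m)(\delta\otimes 1_{A})$, and the Frobenius relation are then routine consequences of the associativity of $m$ and the defining relations of $\delta$. The only step I expect to need genuine care is the equivalence "$\beta$ associative $\iff$ $\psi$ is an $A$-module morphism", since it requires being explicit about the standard left $A$-module structure on $\overline{A}$ and about which mate of $\beta$ is used; everything else is either bookkeeping or a citation to the classical theory of Frobenius algebras.
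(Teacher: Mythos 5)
Your proof is correct and follows the same overall strategy as the paper: reduce to a tensor C$^*$-category via Lemma \ref{lem:con_alg_in_tensor}, take the mate of the pairing $\iota^*m$ to get a nonzero module morphism $A\to\overline{A}$, use semisimplicity (Proposition \ref{prop:sep_alg_bimod_semi}) plus connectedness to show it is invertible, and then invoke the classical correspondence between invariant non-degenerate pairings and Frobenius structures. The one genuine divergence is the invertibility step: the paper works with the right-module morphism $f=((\iota^*m)\otimes 1_{\overline{A}})(1_A\otimes\overline{\gamma}_A)$, notes it is injective because $A$ is a simple right $A$-module, and upgrades it to an isomorphism using the dimension equality $d_A=d_{\overline{A}}$; you instead show that the target $\overline{A}$, with its dual left $A$-module structure, is itself simple (via the contravariant duality equivalence $\bRMod_{\CC}(A)\simeq\bLMod_{\CC}(A)$, or the computation $\mathrm{End}(\overline{A})\simeq\CC(\one,A)^{\op}\simeq\Cb$ together with semisimplicity of $\bLMod_{\CC}(A)$) and conclude by Schur's lemma. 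Your variant is sound and slightly more self-contained, since it avoids dimension theory altogether, at the cost of having to verify the dual module structure on $\overline{A}$ and the equivalence \lq\lq $\beta$ associative $\iff$ the mate $\psi$ is a left $A$-module map\rq\rq, which you correctly flag and which is a routine conjugate-equation computation; your concluding appeal to the non-degenerate pairing construction of \cite{Abr99}, \cite{FRS02} is exactly the content of Lemma 3.7 in \cite{FRS02} that the paper cites.
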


\begin{proof}
By Lemma \ref{lem:con_alg_in_tensor}, we may assume that $\CC$ is a tensor C$^*$-category. Recall the conventions in Remark \ref{rem:standard_sol}. $\overline{A}$ is a right $A$-module with right $A$-action given by
\begin{align*}
    \overline{A} \otimes A \xrightarrow{1_{\overline{A} \otimes A} \otimes \overline{\gamma}_A} \overline{A} \otimes A \otimes A \otimes \overline{A} \xrightarrow{1_{\overline{A}} \otimes m \otimes 1_{\overline{A}}} \overline{A} \otimes A \otimes \overline{A} \xrightarrow{\gamma_A^* \otimes 1_{\overline{A}}} \overline{A}. 
\end{align*}
Let $f: A \to \overline{A}$ be the non-zero right $A$-module morphism defined by  
\begin{align*}
    f:= A \xrightarrow{1_A \otimes \overline{\gamma}_A} A \otimes A \otimes \overline{A} \xrightarrow{(\iota^*m) \otimes 1_{\overline{A}}} \overline{A}. 
\end{align*}
    Since $\bRMod_{\CC}(A)$ is semisimple by Proposition \ref{prop:sep_alg_bimod_semi}, $A$ is a simple right $A$-module by connectedness, and $d_A = d_{\overline{A}}$ (where $d_A$ is the scalar dimension \cite{GL19} of $A$ in $\CC$, or equivalently the dimension \cite{LoRo97} in $\CC_{jj}$, cf. Lemma \ref{lem:con_alg_in_tensor}), $f$ is invertible in $\CC$. Hence, by Lemma 3.7 in \cite{FRS02}, $A$ can be promoted to a Frobenius algebra.
\end{proof}

Let $(\CM, \odot)$ be a left $\CC$-module category. Then $\CM$ is said to be \textit{enriched} in $\CC$ if the functor $C \mapsto \CM(C \odot X, Y): \CC \to \vecc$ is \textit{representable} for every $X, Y \in \CM$, i.e. there exists an object $[X, Y]\in\CC$ such that
\begin{align*}
    \CM(- \odot X, Y) \simeq \CC(-, [X, Y]).
\end{align*}
The object $[X, Y]$ is called the \textit{internal hom} from $X$ to $Y$. In particular, $[X, -]: \CM \to \CC$ is the right adjoint of the functor $- \odot X: \CC \to \CM$. 

If $\CM =\bRMod_{\CC}(A)$, where $A$ is a separable algebra in $\CC$, then $\CM$ is enriched in $\CC$. More explicitly, the internal hom $[X, Y]$ is given by $\overline{X \otimes_A \overline{Y}}$.
We refer the reader to Section 7 in \cite{EGNO15} or Section 2 in \cite{KZ18} for basic facts about internal homs. 

\begin{lem}\label{lem:sep_alg_unitary_module}
Let $(A, m_A, \iota_A)$ be an indecomposable separable algebra in $\CC$. Then there exists a connected special C$^*$-Frobenius algebra $(B, m_B, \iota_B)$ in $\CC$ such that $\bRMod_{\CC}(A)$ and $\bRMod_{\CC}(B)$ are equivalent as left $\CC$-module categories. 

In particular, $\bRMod_{\CC}(A)$ is equivalent to a left C$^*$-module category over $\CC$.
\end{lem}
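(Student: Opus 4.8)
The plan is to produce the desired connected special C$^*$-Frobenius algebra as an internal hom in a unitarized module category, so the first task is to build a C$^*$-structure on $\bRMod_{\CC}(A)$. Since $A$ is indecomposable, $\bRMod_{\CC}(A)$ is a semisimple indecomposable left $\CC$-module category by Proposition \ref{prop:sep_alg_bimod_semi}. Pick any nonzero simple object $M \in \bRMod_{\CC}(A)$ and form the internal hom $B := [M, M] \in \CC$; as recalled above this is $\overline{M \otimes_A \overline{M}}$, and it carries a canonical algebra structure in $\CC$ coming from composition of internal homs. I would first observe that $B$ is connected: $\CC(\one, B) \simeq \bRMod_{\CC}(A)(M, M) \simeq \Cb$ because $M$ is simple in a semisimple $\Cb$-linear category. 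Hence by Lemma \ref{lem:con_sep_alg_Frob} (once we know $B$ is separable) $B$ can be promoted to a Frobenius algebra, and a connected Frobenius algebra in a multitensor C$^*$-category is unitarizable by Theorem 3.2 in \cite{CGGH23} (applied inside the simple-unit summand $\CC_{jj}$ via Lemma \ref{lem:con_alg_in_tensor}); replacing $B$ by an isomorphic copy, we may take $B$ to be a connected special C$^*$-Frobenius algebra.

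The second task is to identify $\bRMod_{\CC}(B)$ with $\bRMod_{\CC}(A)$ as left $\CC$-module categories. The general internal-hom yoga (Section 7.9--7.11 of \cite{EGNO15}) gives a canonical left $\CC$-module functor $\bRMod_{\CC}(A) \to \bRMod_{\CC}(B)$, $X \mapsto [M, X]$, and this is an equivalence precisely when $M$ is a generator of the $\CC$-module category, i.e. when every object of $\bRMod_{\CC}(A)$ is a subobject of $C \odot M$ for some $C \in \CC$. This is exactly where indecomposability of $A$ is used: in a semisimple $\CC$-module category the full subcategory of objects dominated by $\{C \odot M : C \in \CC\}$ is a $\CC$-submodule category, hence a direct summand, hence (being nonzero, as it contains $M$) everything. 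So $[M, -]$ is an equivalence of left $\CC$-module categories $\bRMod_{\CC}(A) \xrightarrow{\sim} \bRMod_{\CC}(B)$, which proves the first assertion. The ``in particular'' then follows immediately: $B$ is a special C$^*$-Frobenius algebra, so $\bRMod_{\CC}(B)$ is a left C$^*$-module category over $\CC$ by Proposition \ref{prop:special_Cstar_mod}, and we transport this C$^*$-structure along the equivalence to $\bRMod_{\CC}(A)$.

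The one gap to fill carefully is the separability of $B$ (needed before invoking Lemma \ref{lem:con_sep_alg_Frob}). I would get it from Proposition \ref{prop:sep_alg_bimod_semi}: by Proposition \ref{prop:equ_mod_fun_bimod}, $\bBMod_{\CC}(B|B) \simeq \Fun_{\CC|}(\bRMod_{\CC}(B), \bRMod_{\CC}(B))$, and via the equivalence $\bRMod_{\CC}(A) \simeq \bRMod_{\CC}(B)$ just constructed this is $\Fun_{\CC|}(\bRMod_{\CC}(A), \bRMod_{\CC}(A)) \simeq \bBMod_{\CC}(A|A)$, which is semisimple since $A$ is separable; hence $B$ is separable by the second part of Proposition \ref{prop:sep_alg_bimod_semi}. (Alternatively, semisimplicity of $\bRMod_{\CC}(B)$ can be seen directly: it is equivalent as a plain category to the semisimple $\bRMod_{\CC}(A)$.)

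The main obstacle I anticipate is not any single hard estimate but rather the bookkeeping of the internal-hom formalism over a \emph{multi}tensor — as opposed to tensor — C$^*$-category: one must make sure the internal hom $[M,M]$, its algebra structure, and the module functor $[M,-]$ behave correctly across the block decomposition $\CC \simeq \oplus_{ij}\CC_{ij}$ of Remark \ref{rem:standard_sol}, and in particular that the connectedness/Frobenius machinery of Lemma \ref{lem:con_sep_alg_Frob} and of \cite{CGGH23}, which is phrased for a simple tensor unit, can legitimately be applied to $B$ via Lemma \ref{lem:con_alg_in_tensor}. Once one checks that $B$ lands in a single block $\CC_{jj}$ (because it is connected), everything reduces to the tensor-unit-simple case and the argument goes through.
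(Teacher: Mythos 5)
Your proposal is correct and follows essentially the same route as the paper: take the internal hom $[M,M]$ of a nonzero simple object, get connectedness from simplicity, separability from $\bBMod_{\CC}([M,M]|[M,M])\simeq\Fun_{\CC|}(\bRMod_{\CC}(A),\bRMod_{\CC}(A))\simeq\bBMod_{\CC}(A|A)$ via Propositions \ref{prop:sep_alg_bimod_semi} and \ref{prop:equ_mod_fun_bimod}, unitarize via Lemma \ref{lem:con_sep_alg_Frob}, Lemma \ref{lem:con_alg_in_tensor} and Theorem 3.2 of \cite{CGGH23}, and conclude with Proposition \ref{prop:special_Cstar_mod}. The only difference is cosmetic: you spell out the generator/indecomposability argument behind the equivalence $\bRMod_{\CC}(A)\simeq\bRMod_{\CC}([M,M])$, which the paper delegates to the proof of Theorem 3.1 in \cite{Ost03}.
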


\begin{proof}
    Let $X$ be a non-zero simple object in $\bRMod_{\CC}(A)$. By Proposition \ref{prop:sep_alg_bimod_semi} and by the proof of Theorem 3.1 in \cite{Ost03} (cf. Theorem 2.1.7 in \cite{KZ18}), the internal hom $[X, X]$ in $\bRMod_{\CC}(A)$ is a connected (by the simplicity of $X$) algebra in $\CC$ such that $\bRMod_{\CC}(A)$ and $\bRMod_{\CC}([X, X])$ are equivalent. Note that $\bRMod_{\CC}(A)$ and $\bRMod_{\CC}([X,X])$ are both semisimple. Since 
\begin{align*}
     \Fun_{\CC|}(\bRMod_{\CC}([X, X]), \bRMod_{\CC}([X, X])) \simeq \Fun_{\CC|}(\bRMod_{\CC}(A), \bRMod_{\CC}(A)),   
\end{align*}
from Proposition \ref{prop:sep_alg_bimod_semi} and Proposition \ref{prop:equ_mod_fun_bimod} it follows that $A$ separable implies that $[X,X]$ is separable. By Lemma \ref{lem:con_sep_alg_Frob}, $[X,X]$ can be promoted to a connected Frobenius algebra. Then $[X,X]$ is isomorphic to a special C$^*$-Frobenius algebra $B$ in $\CC$ by Lemma \ref{lem:con_alg_in_tensor} and by Theorem 3.2, cf. Remark 3.3, in \cite{CGGH23}. We conclude that $\bRMod_{\CC}(A)$ is equivalent to $\bRMod_{\CC}(B)$. The latter is a left C$^*$-module category over $\CC$ by Proposition \ref{prop:special_Cstar_mod}.
\end{proof}

The following result is of independent interest and it should be compared with Lemma 2.18 in \cite{GS12} for $\CM =\bRMod_{\CC}(A)$, and Theorem A.1 in \cite{NY18}.

\begin{prop}\label{prop:internal_C_*_Frob_alg}
Let $(\CM, \odot)$ be an indecomposable left C$^*$-module over $\CC$ which is enriched in $\CC$. For every non-zero object $X$ in $\CM$, the internal hom $[X, X]$ is isomorphic (up to rescaling) to a special C$^*$-Frobenius algebra in $\CC$. 
\end{prop}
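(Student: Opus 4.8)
The plan is to reduce the general statement to the connected case already handled in Lemma~\ref{lem:sep_alg_unitary_module}, using the internal-hom machinery and the C$^*$-structure on $\CM$. First I would observe that an indecomposable C$^*$-module category $(\CM,\odot)$ over $\CC$ which is enriched in $\CC$ is, in particular, semisimple (being a C$^*$-category with enough structure, cf. the arguments around Proposition~\ref{prop:special_Cstar_mod}), so that the internal hom $[X,X]$ is a well-behaved algebra in $\CC$. Since $\CM$ is indecomposable and $X\neq 0$, the functor $-\odot X:\CC\to\CM$ is dominant, hence $\CM\simeq\bRMod_{\CC}([X,X])$ as left $\CC$-module categories by the standard internal-hom reconstruction (proof of Theorem~3.1 in \cite{Ost03}, cf. Theorem~2.1.7 in \cite{KZ18}); in particular $\bRMod_{\CC}([X,X])$ is semisimple, so by Proposition~\ref{prop:sep_alg_bimod_semi} (applied via Proposition~\ref{prop:equ_mod_fun_bimod} to the endofunctor category) $[X,X]$ is a separable algebra in $\CC$.

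The key new point is to use the C$^*$-structure of $\CM$ to make the internal hom into a $*$-algebra compatible with that of $\CC$, rather than merely a separable algebra. Concretely, for $X\in\CM$ the counit $\ev:[X,X]\odot X\to X$ and the composition $[X,X]\otimes[X,X]\to[X,X]$ (i.e. the multiplication $m$) together with the unit $\iota\in\CC(\one,[X,X])$ coming from $1_X$ define the algebra structure; I would check that the adjoint $m^*$, computed using the C$^*$-structure on $\CM$ together with a standard solution of the conjugate equations for $X$ in $\CM$ in the sense of Remark~\ref{rem:standard_sol} (adapted to module categories), makes $[X,X]$ into a C$^*$-Frobenius algebra, i.e. $m^*$ is an $[X,X]$-bimodule map satisfying \eqref{eq:Cstar_Frob}. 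This is where the indecomposability of $\CM$ enters a second time: it guarantees, after decomposing $\one=\oplus_i\one_i$ and $\CM$ accordingly, that one can normalize the conjugate-equation solution so that the resulting Frobenius structure is genuinely a $*$-structure on the nose, up to the overall rescaling per block that is allowed by the statement (this is exactly the same subtlety about non-sphericity already flagged in Remark~\ref{rem:standard_sol}, and is the reason the conclusion is only ``up to rescaling'').

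Having produced a C$^*$-Frobenius structure on $[X,X]$, I would invoke Proposition~\ref{prop:algs_and_special_algs}: a C$^*$-Frobenius algebra in $\CC$ is isomorphic to a special one, so $[X,X]$ is unitarizable, which is the claim. Alternatively, and perhaps more cleanly, one can proceed exactly as in the proof of Lemma~\ref{lem:sep_alg_unitary_module}: $[X,X]$ need not be connected, but it is separable, so by Lemma~\ref{lem:sep_alg_sum_ind} it decomposes into indecomposable separable summands, each of which (after a further internal-hom step with a simple object, using that $\CM$ is semisimple) is connected and hence, by Lemma~\ref{lem:con_sep_alg_Frob} and Theorem~3.2 of \cite{CGGH23}, isomorphic to a special C$^*$-Frobenius algebra; reassembling gives that $[X,X]$ itself is isomorphic to a special C$^*$-Frobenius algebra.

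The main obstacle I anticipate is the second step: transporting the $*$-structure of $\CM$ to the internal hom in a way that is demonstrably compatible with the $*$-structure of $\CC$, and pinning down precisely which rescalings are unavoidable. The internal hom is only defined up to isomorphism, and the naive adjoint of $m$ depends on the choice of evaluation/coevaluation maps for $X$; one must verify that a consistent choice exists making $(A,m,\iota)$ with $A=[X,X]$ satisfy $m^*$-Frobenius, and that the failure of sphericity (present already in the multitensor setting of Remark~\ref{rem:standard_sol}) manifests only as a block-wise positive scalar, which is absorbed by ``up to rescaling''. Once this normalization bookkeeping is done, the rest is an application of results already in hand.
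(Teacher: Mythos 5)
Your proposal has a genuine gap, and neither of your two routes closes it. The ``cleaner'' alternative route is essentially circular: separability of $[X,X]$ together with Lemma~\ref{lem:sep_alg_sum_ind}, Lemma~\ref{lem:con_sep_alg_Frob} and Theorem~3.2 of \cite{CGGH23} only treats \emph{connected} algebras, and an indecomposable separable algebra need not be connected. Your ``further internal-hom step with a simple object'' replaces each summand by a \emph{Morita-equivalent} algebra, not an isomorphic one, so reassembling does not give that $[X,X]$ itself is isomorphic to a special C$^*$-Frobenius algebra. Converting such a Morita equivalence back into an isomorphism of algebras is exactly what Proposition~\ref{prop:internal_C_*_Frob_alg} is for (and what the proof of Theorem~\ref{thm:isomQsys_iff_sep} then exploits), so this route assumes what is to be proved. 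Your first route aims at the right target (a genuine $*$-compatible Frobenius/special structure on $[X,X]$, then Proposition~\ref{prop:algs_and_special_algs}), but the decisive step is left as ``I would check'', and the mechanism you sketch is off: $X$ is an object of the module category $\CM$, which is not monoidal, so there is no solution of the conjugate equations ``for $X$ in $\CM$''; what has to be unitarized is the \emph{adjunction} between $-\odot X$ and $[X,-]$. Moreover, your opening assertions (semisimplicity of $\CM$, dominance of $-\odot X$, hence $\CM\simeq\bRMod_{\CC}([X,X])$ and separability of $[X,X]$) are not justified in this generality and are not needed.

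For comparison, the paper's argument is analytic and avoids separability altogether: choose $[X,-]$ to be a $*$-functor (Proposition~2.3 in \cite{Reu23}), endow $\CC(C,[X,Y])$ and $\CM(C\odot X,Y)$ with inner products built from $\tau$, $\gamma_C$ and a faithful tracial state on $\CM(X,X)$, replace the adjunction isomorphism by its unitary part via polar decomposition, and check that the module-functor constraint $\alpha_{C,Y}$ is then unitary. The key step is the polar decomposition $\ev_Y=h_Yu_Y$ of the evaluation: compatibility of $\ev$ with $\alpha$ gives $1_C\odot h_Y=h_{C\odot Y}$, so $h$ is a $\CC$-module natural endomorphism of $\Id_{\CM}$, and \emph{this} is where indecomposability of $\CM$ enters, forcing $h_Y=\lambda 1_Y$ for a single $\lambda>0$; it follows that $mm^*=\lambda^2 1_{[X,X]}$, and one rescales to get a special C$^*$-Frobenius algebra. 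In your sketch, indecomposability is instead invoked for a blockwise normalization over $\one=\oplus_i\one_i$, which misses the actual point; the unavoidable rescaling is the single scalar $\lambda$ coming from the positive part of the evaluation, not a non-sphericity bookkeeping as in Remark~\ref{rem:standard_sol}. Without this (or an equivalent) argument showing that the adjoint $m^*$ with respect to the C$^*$-structures actually satisfies the special/Frobenius identities, your proof does not go through.
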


\begin{proof}
    By Proposition 2.3 in \cite{Reu23}, we may choose the right adjoint $[X, -]: \CM \to \CC$ of the $*$-functor $- \odot X: \CC \to \CM$ to be a $*$-functor. For every $C \in \CC$ and $Y \in \CM$, we treat $\CC(C, [X, Y])$ as the Hilbert space with inner product given by
\begin{align*}
    \braket{f_1}{f_2} := \tau \left (\gamma_{C}^*(1_{\overline{C}} \otimes f_1^*f_2) \gamma_C \right) \! ,
\end{align*}
where $\gamma_C$ and $\tau$ are defined in Remark \ref{rem:standard_sol}. Fix a faithful tracial state $\Tr$ on $\CM(X, X)$. We treat $\CM(C \odot X, Y)$ as the Hilbert space with inner product defined by
\begin{align*}
    \braket{g_1}{g_2} := \Tr \left( \left ((\gamma_C^* \otimes 1_X) (1_{\overline{C}} \odot g_1^*)  \right) \left ( (1_{\overline{C}} \odot g_2 ) (\gamma_C \otimes 1_X) \right) \right) \! .
\end{align*}
By the enrichment assumption, $\CC(-, [X, -])$ and $\CM(- \odot X, -)$ are equivalent bilinear $*$-functors $\CC^{\op} \times \CM \to \hilbc$, i.e. $\CC(f, [1_X, g])^* = \CC(f^*, [1_X, g^*])$ and $\CM(f \odot 1_X, g)^* = \CM(f^* \odot 1_X, g^*)$ for every $f \in \CC(C_2, C_1)$ and $g \in \CM(Y_1, Y_2)$. By considering the polar decomposition of natural isomorphisms, we may assume that the natural isomorphism $\CC(-, [X, -]) \simeq \CM(- \odot X, -)$ is componentwise unitary, i.e. $\CC(C, [X,Y]) \simeq \CM(C \odot X, Y)$ is unitary for every $C \in \CC$ and $Y \in \CM$.  

Note that $[X, -]$ is a left $\CC$-module functor with the $\CC$-module structure $\alpha_{C, Y}: C \otimes [X, Y] \xrightarrow{\sim} [X, C \odot Y]$ defined by the following natural isomorphism
    \begin{equation}\label{equ:internal_C_*_Frob_alg_1}
   \begin{aligned}
        \CC(B, C \otimes [X, Y]) &\xrightarrow{\sim} \CC( \overline{C} \otimes B, [X,Y]) \xrightarrow{\sim} \CM((\overline{C} \otimes B) \odot X, Y)\\
        &\xrightarrow{\sim} \CM(\overline{C} \odot (B \odot X), Y) \xrightarrow{\sim} \CM(B \odot X, C \odot Y) \xrightarrow{\sim} \CC(B, [X, C \odot Y]), 
   \end{aligned} 
\end{equation}
    where the first and fourth morphisms are induced by the solution of conjugate equation $(\gamma_C, \overline{\gamma}_C)$ and the third morphism is induced by the module structure of $\CM$ (see Section 7.12 in \cite{EGNO15}). By the fact that the natural isomorphism $\CC(-, [X, -]) \simeq \CM(- \odot X, -)$ is componentwise unitary, it is not hard to check the the natural isomorphism \eqref{equ:internal_C_*_Frob_alg_1} is unitary. Thus $\alpha_{C, Y}$ is unitary.

The evaluation $\ev_Y: [X, Y] \odot X \to Y$ is obtained as the image of $1_{[X, Y]}$ under the natural isomorphism $\CC([X, Y], [X, Y]) \simeq \CM([X, Y] \odot X, Y)$. Let $\ev_Y = h_Y u_Y$ be the polar decomposition of $\ev_Y$, where $h_Y := \sqrt{\ev_Y\ev_Y^*}$. Since $\alpha_{C,Y}$ is the unique morphism such that the following diagram commutes
\begin{align*}
    \xymatrix @R=0.2in{
        (C \otimes [X,Y]) \odot X \ar[d]^-{\alpha_{C, Y}} \ar[r]^-{\sim} & C \odot ([X,Y] \odot X) \ar[d]^-{1_C \odot \ev_Y}\\
        [X, C \odot Y] \odot X \ar[r]^-{\ev_{C \odot Y}} & C \odot Y,
    }
\end{align*}
    by the uniqueness of the polar decomposition, we have $1_C \odot h_Y = h_{C \odot Y}$. In particular, $h_Y: Y \to Y$ is a left $\CC$-module natural isomorphism of the identity functor $\Id_{\CM}$ to itself. Since $\CM$ is indecomposable, there exist $\lambda > 0$ such that $h_Y = \lambda 1_Y$ for every $Y$. Since the multiplication of $m: [X, X] \otimes [X, X] \to [X, X]$ is defined by 
\begin{align*}
    [X,X] \otimes [X,X] \xrightarrow{\alpha_{[X, X], X}} [X, [X,X] \odot X] \xrightarrow{[1_X, \ev_X]} [X,X],
\end{align*}
(see Section 3.2 in \cite{Ost03}) we have $mm^* = \lambda^2 1_{[X, X]}$. Hence $[X,X]$ can be rescaled to a special C$^*$-Frobenius algebra.
\end{proof}

Summing up, we can state and prove our main result:

\begin{thm}\label{thm:isomQsys_iff_sep}
    An algebra in a multitensor C$^*$-category $\CC$ is isomorphic to a special C$^*$-Frobenius algebra if and only if it is separable.
\end{thm}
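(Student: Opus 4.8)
The plan is to prove the two implications separately, the forward one being trivial and the reverse one carrying all the content. For the trivial direction: a special C$^*$-Frobenius algebra is separable (as observed right after Definition \ref{def:alg_separable}, since \eqref{eq:Cstar_Frob} exhibits $m^*$ as a bimodule morphism with $mm^*=1_A$), and separability is preserved under isomorphism of algebras; hence anything isomorphic to a special C$^*$-Frobenius algebra is separable. For the reverse direction, I would start with a separable algebra $(A,m,\iota)$ and first reduce to the indecomposable case: by Lemma \ref{lem:sep_alg_sum_ind}, $A$ decomposes as a finite direct sum of algebras $A\cong\bigoplus_k A_k$ with each $A_k$ indecomposable and separable, and since a finite direct sum of special C$^*$-Frobenius algebras is again one (the multiplication is block-diagonal, so $m^*=\bigoplus_k m_k^*$, $mm^*=\bigoplus_k m_k m_k^*=1$, and the Frobenius and module identities hold blockwise) and ``isomorphic to a special C$^*$-Frobenius algebra'' is closed under finite direct sums, it is enough to treat a single indecomposable separable $(A,m,\iota)$.

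For the indecomposable case I would invoke Lemma \ref{lem:sep_alg_unitary_module} to obtain a connected special C$^*$-Frobenius algebra $B$ in $\CC$ together with an equivalence $\Phi\colon\bRMod_\CC(A)\xrightarrow{\ \sim\ }\bRMod_\CC(B)$ of left $\CC$-module categories. By Proposition \ref{prop:special_Cstar_mod} the category $\bRMod_\CC(B)$ is a left C$^*$-module category over $\CC$, and transporting its involution and norms along $\Phi$ endows $\CM:=\bRMod_\CC(A)$ with the structure of a left C$^*$-module category over $\CC$, still indecomposable because $A$ is. Since this transport does not touch the underlying $\CC$-module category, $\CM$ remains enriched in $\CC$ with internal hom $[X,Y]=\overline{X\otimes_A\overline{Y}}$, and for the free rank-one module $X=A$ the free--forgetful adjunction gives a natural isomorphism $\CM(C\odot A,A)\cong\CC(C,A)$ that identifies $[A,A]$ with $A$ as algebras in $\CC$ (cf. Section 3.2 in \cite{Ost03}). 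Then I would apply Proposition \ref{prop:internal_C_*_Frob_alg} to $\CM$ with this $X=A$: it gives that $[A,A]$ is isomorphic, up to a rescaling of its multiplication, to a special C$^*$-Frobenius algebra; concretely, if $m_{[A,A]}m_{[A,A]}^*=\lambda^2 1$ with $\lambda>0$, then $\lambda 1_{[A,A]}$ is an algebra isomorphism from $[A,A]$ onto $\big([A,A],\lambda^{-1}m_{[A,A]},\lambda\,\iota_{[A,A]}\big)$, which is special C$^*$-Frobenius. Composing with $A\cong[A,A]$ shows $A$ is isomorphic to a special C$^*$-Frobenius algebra, and reassembling the summands $A_k$ finishes the proof.

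The step I expect to be most delicate is the identification of $[A,A]$ with $A$ as \emph{algebras} (not merely as objects of $\CC$): one must check that the representability isomorphism produced by Frobenius reciprocity intertwines the internal-hom multiplication on $[A,A]$ with the original multiplication on $A$. This is standard internal-hom formalism and, importantly, depends only on the underlying $\CC$-module category $\bRMod_\CC(A)$ and not on the transported C$^*$-structure, so it is insensitive to all the choices made above. Everything else --- the reduction to indecomposables, the block-diagonal bookkeeping for direct sums, the transport of the C$^*$-structure along $\Phi$, and the scalar rescaling at the end --- is routine.
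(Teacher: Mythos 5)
Your overall architecture is the same as the paper's (reduce to the indecomposable case via Lemma \ref{lem:sep_alg_sum_ind}, invoke Lemma \ref{lem:sep_alg_unitary_module}, feed an internal hom into Proposition \ref{prop:internal_C_*_Frob_alg}, and use $A\cong[A,A]$ plus the rescaling), and the easy direction, the direct-sum bookkeeping and the rescaling computation are all fine. The one genuine gap is the transport step. Defining $g^\dagger:=\Phi^{-1}(\Phi(g)^*)$ and $\vertiii{g}:=\|\Phi(g)\|$ on $\bRMod_{\CC}(A)$ does make it a C$^*$-category, but your claim that the \emph{original} action $\odot$ is then a $*$-bifunctor (and the associativity constraints unitary), so that $\bRMod_{\CC}(A)$ becomes a left C$^*$-module category with its original enrichment, is not automatic. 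Writing $s_{C,X}\colon\Phi(C\odot X)\to C\odot_B\Phi(X)$ for the module-functor structure of $\Phi$, naturality gives $\Phi(f\odot g)=s_{C',X'}^{-1}(f\odot\Phi(g))\,s_{C,X}$, and unwinding the definitions shows that $(f\odot g)^\dagger=f^*\odot g^\dagger$ for all $f,g$ holds precisely when $s\,s^*$ is a natural endomorphism of $-\odot_B\Phi(-)$, i.e. when $s$ can be chosen unitary. Lemma \ref{lem:sep_alg_unitary_module} only provides a plain $\CC$-module equivalence, with no unitarity of its coherence data; unitarizing the structure morphisms of a module functor while keeping coherence is itself a nontrivial statement (this is exactly the kind of issue handled in \cite{Reu23}, and via polar decompositions of natural transformations inside the proof of Proposition \ref{prop:internal_C_*_Frob_alg}). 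So, as written, the hypotheses of Proposition \ref{prop:internal_C_*_Frob_alg} are not verified for $\bRMod_{\CC}(A)$ with the transported structure, and the sentence ``this transport does not touch the underlying $\CC$-module category'' is precisely where the difficulty is hidden.

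The gap is local and repairable by the paper's own device: do not transport the C$^*$-structure back, but instead apply Proposition \ref{prop:internal_C_*_Frob_alg} inside the genuine C$^*$-module category $\bRMod_{\CC}(B)$ (Proposition \ref{prop:special_Cstar_mod}) to the object $\Phi(A)$, and use the purely algebraic fact that a $\CC$-module equivalence identifies internal hom algebras, so that $[A,A]\cong[\Phi(A),\Phi(A)]$ as algebras in $\CC$; combined with the identification $A\cong[A,A]$ (Remark 3.5 in \cite{Ost03}), which you already use, this yields that $A$ is isomorphic to a special C$^*$-Frobenius algebra, and the rest of your argument goes through unchanged.
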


\begin{proof}
    By Lemma \ref{lem:sep_alg_sum_ind}, we only need to show that every indecomposable separable algebra $(A, m_A, \iota_A)$ in $\CC$ is isomorphic to a special C$^*$-Frobenius algebra. Recall that $\bRMod_{\CC}(A)$ is equivalent to a left C$^*$-module category over $\CC$, denoted by $\CM$, by Lemma \ref{lem:sep_alg_unitary_module}. Let $F:\bRMod_{\CC}(A) \to \CM$ be the equivalence of left $\CC$-module categories. The algebra $A$ seen as an object of $\bRMod_{\CC}(A)$ equals $[A,A]$, see e.g. Remark 3.5 in \cite{Ost03}, hence it is isomorphic to $[F(A), F(A)]$. The latter is isomorphic to a special C$^*$-Frobenius algebra by Proposition \ref{prop:internal_C_*_Frob_alg}, hence $A$ is, and the proof is complete.
\end{proof}

For fusion C$^*$-categories $\CC$, the following is stated as Corollary 3.8 in \cite{CGGH23}, as a consequence of Theorem 3.2 therein.

\begin{cor}
    Let $\CM$ be a finite semisimple left module category over a multi-fusion C$^*$-category $\CC$. Then $\CM$ is equivalent to $\bRMod_{\CC}(A)$ for a special C$^*$-Frobenius algebra $A$. 
    
    Therefore, every finite semisimple left module category $\CM$ over a multi-fusion C$^*$-category $\CC$ admits a unique unitary structure (up to unitary module equivalence).
\end{cor}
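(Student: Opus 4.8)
The plan is to combine Theorem \ref{thm:isomQsys_iff_sep} with the standard internal-hom description of module categories and the C$^*$-structure results recalled in the previous sections; the existence half is essentially immediate, while the ``uniqueness'' half is the part that needs care.

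\emph{Existence.} First I would invoke the classification of finite module categories over multi-fusion categories (e.g. \cite{Ost03}, \cite{EGNO15}, Section 7.10) to write $\CM \simeq \bRMod_{\CC}(A)$ for some algebra $(A, m_A, \iota_A)$ in $\CC$. Since $\CM$, and hence $\bRMod_{\CC}(A)$, is semisimple, Proposition \ref{prop:equ_mod_fun_bimod} gives $\bBMod_{\CC}(A|A) \simeq \Fun_{\CC|}(\bRMod_{\CC}(A), \bRMod_{\CC}(A))$, which is semisimple (it is the dual of a finite semisimple module category over a multi-fusion C$^*$-category, hence itself multi-fusion). By Proposition \ref{prop:sep_alg_bimod_semi}, $A$ is separable. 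Then Theorem \ref{thm:isomQsys_iff_sep} supplies a special C$^*$-Frobenius algebra $B$ in $\CC$ with $A \cong B$ as algebras, so $\bRMod_{\CC}(A) \simeq \bRMod_{\CC}(B)$ as left $\CC$-module categories, and $\bRMod_{\CC}(B)$ is a left C$^*$-module category by Proposition \ref{prop:special_Cstar_mod}. Transporting this C$^*$-structure along the equivalence $\CM \simeq \bRMod_{\CC}(B)$ equips $\CM$ with a unitary structure.

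\emph{Uniqueness.} I would show that any two unitary structures on the plain module category $\CM$ are related by a module $*$-equivalence. Let $\widetilde{\CM}_1, \widetilde{\CM}_2$ be two left C$^*$-module categories together with plain module equivalences to $\CM$; then $\widetilde{\CM}_1 \simeq \widetilde{\CM}_2$ as plain $\CC$-module categories. The canonical decomposition into indecomposables is preserved by any equivalence, so I may assume $\widetilde{\CM}_1, \widetilde{\CM}_2$ indecomposable. A finite semisimple C$^*$-module category is enriched in $\CC$, so picking a nonzero simple $X_i \in \widetilde{\CM}_i$ and passing to the internal-hom algebra, Proposition \ref{prop:internal_C_*_Frob_alg} realizes $\widetilde{\CM}_i \simeq \bRMod_{\CC}(B_i)$ as C$^*$-module categories for special C$^*$-Frobenius algebras $B_i$ (here one uses that the internal-hom equivalence of \cite{Ost03} is a $*$-equivalence when both sides are C$^*$). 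Now the plain equivalence $\bRMod_{\CC}(B_1) \simeq \bRMod_{\CC}(B_2)$ is, by Proposition \ref{prop:equ_mod_fun_bimod}, implemented by an invertible $B_1$-$B_2$-bimodule $M$; since $B_1, B_2$ are special C$^*$-Frobenius, the bimodule analogue of Proposition \ref{prop:special_Cstar_mod} (see \cite{BKLR15}, Chapters 4--5, and \cite{GS12}) lets me replace $M$ by an isomorphic ``special'' bimodule $M'$ for which $- \otimes_{B_1} M' : \bRMod_{\CC}(B_1) \to \bRMod_{\CC}(B_2)$ is a $*$-equivalence of C$^*$-module categories. Composing, $\widetilde{\CM}_1 \simeq \widetilde{\CM}_2$ as C$^*$-module categories.

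I expect the uniqueness half to be the main obstacle, and within it the two genuine inputs are: that the internal-hom equivalence of \cite{Ost03} respects the $*$-structure, and that the comparison bimodule $M$ can be unitarized to $M'$. Both are the natural upgrades of Proposition \ref{prop:special_Cstar_mod} (from modules to bimodules, and from plain to $*$-equivalences) and belong to the Q-system/Morita-theory folklore for C$^*$-categories; crucially they do not use simplicity of the tensor unit, so the argument of \cite{CGGH23}, Corollary 3.8, transfers verbatim to the multi-fusion setting once Theorem \ref{thm:isomQsys_iff_sep} is available in place of Theorem 3.2 of \cite{CGGH23}. (Alternatively, one can bypass bimodules: the plain equivalence forces the internal-hom algebras of $\widetilde{\CM}_1$ and $\widetilde{\CM}_2$ to be isomorphic as plain algebras, and one checks directly that two isomorphic special C$^*$-Frobenius algebras are unitarily isomorphic by polar-decomposing the isomorphism inside the C$^*$-category $\bRMod_{\CC}(B_1)$.)
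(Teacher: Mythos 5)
Your existence argument is correct and is essentially the paper's own: write $\CM\simeq\bRMod_{\CC}(B)$ via the classification of finite module categories, deduce semisimplicity of $\bBMod_{\CC}(B|B)\simeq\Fun_{\CC|}(\bRMod_{\CC}(B),\bRMod_{\CC}(B))$ from semisimplicity of $\CM$ (the paper cites Theorem 2.18 of \cite{ENO05} for exactly this), conclude separability of $B$ from Proposition \ref{prop:sep_alg_bimod_semi}, and then apply Theorem \ref{thm:isomQsys_iff_sep} and Proposition \ref{prop:special_Cstar_mod}.

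The uniqueness half has a genuine gap. The paper does not reprove uniqueness; it invokes Corollary 9 (with Theorem 1 and Remark 4) of \cite{Reu23}, which is a substantive theorem, and your sketch assumes precisely its content at the decisive points. (i) You assert that the internal-hom equivalence $\widetilde{\CM}_i\simeq\bRMod_{\CC}([X_i,X_i])$ of \cite{Ost03} is a $*$-equivalence of C$^*$-module categories; but Proposition \ref{prop:internal_C_*_Frob_alg} only produces the special C$^*$-Frobenius algebra, it does not upgrade the Ostrik equivalence to a unitary module equivalence -- that upgrade is essentially Theorem A.1 of \cite{NY18} and requires its own argument. (ii) You assert that the invertible comparison bimodule $M$ can be replaced by a special $M'$ for which $-\otimes_{B_1}M'$ is a $*$-equivalence; unitarizing the two actions of $M$ does not by itself make the relative tensor product functor, its module constraint, and its quasi-inverse compatible with the daggers, and the statement that a plain module equivalence between unitary module categories can be replaced by a unitary one is exactly Reutter's theorem, so as written the step is circular. (iii) The parenthetical shortcut is also unproved: if $t$ is an algebra isomorphism between special C$^*$-Frobenius algebras, the unitary in the polar decomposition $t=u(t^*t)^{1/2}$ need not intertwine the multiplications, since $(t^*t)^{1/2}$ is not an algebra morphism; and even granting that isomorphic special C$^*$-Frobenius algebras are unitarily isomorphic, you would still need (i) to turn this into a unitary equivalence between the two given C$^*$-structures on $\CM$. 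So either cite \cite{Reu23} for the uniqueness statement, as the paper does, or supply genuine proofs of (i) and (ii).
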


\begin{proof}
    By Corollary 7.10.5 in \cite{EGNO15}, $\CM$ is equivalent to $\bRMod_{\CC}(B)$, where $B$ is an algebra in $\CC$. Since $\CM$ is semisimple, by Theorem 2.18 in \cite{ENO05}, we have that $\bBMod_{\CC}(B|B) \simeq \Fun_{\CC|}(\bRMod_{\CC}(B), \bRMod_{\CC}(B))$ is semisimple. Then $B$ is separable by Proposition \ref{prop:sep_alg_bimod_semi}, and  $\bRMod_{\CC}(B)$ is equivalent to $\bRMod_{\CC}(A)$ for a special C$^*$-Frobenius algebra $A$ by Theorem \ref{thm:isomQsys_iff_sep}.
    The uniqueness statement follows from Corollary 9 in \cite{Reu23}, see also Theorem 1 and Remark 4 therein.
\end{proof}

We conclude with an application of Theorem \ref{thm:isomQsys_iff_sep} which justifies Remark 4.2 in \cite{GiYu23}.
The \textit{idempotent completion} of a locally idempotent complete bicategory $\BB$, introduced in Definition A.5.1 in \cite{DR18-arxiv}, is the bicategory whose objects are \textit{separable} algebras in $\BB$, whose $1$-morphisms are bimodules, and whose $2$-morphisms are bimodule maps. By Proposition A.5.4 in \cite{DR18-arxiv}, there exists a canonical fully faithful bifunctor from $\BB$ into its idempotent completion. $\BB$ is called \textit{idempotent complete} if this bifunctor is a biequivalence. By combining the straightforward generalization of Theorem \ref{thm:isomQsys_iff_sep} to algebras in (rigid) semisimple C$^*$-bicategories and Lemma 4.1 in \cite{GiYu23}, we have the following result.

\begin{cor}
The rigid C$^*$-bicategory of finite direct sums of II$_1$ factors, finite Connes' bimodules and intertwiners is idempotent complete. 
\end{cor}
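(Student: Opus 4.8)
The plan is to deduce this last corollary from the already-established machinery by reducing ``idempotent completeness'' to the statement that every separable algebra in the relevant C$^*$-bicategory is (isomorphic to) a unitarily separable one. The rigid C$^*$-bicategory in question, call it $\BF$, has as objects finite direct sums of II$_1$ factors, as $1$-morphisms finite Connes' bimodules, and as $2$-morphisms bounded intertwiners; by the cited Lemma 4.1 in \cite{GiYu23}, $\BF$ is a rigid semisimple C$^*$-bicategory, so that locally (i.e. in each hom-category $\BF(M,N)$) all the hypotheses needed for the multitensor arguments of Theorem \ref{thm:isomQsys_iff_sep} are available. In particular $\BF$ is locally idempotent complete, so its idempotent completion in the sense of \cite{DR18-arxiv} is defined, and there is the canonical fully faithful bifunctor $\iota\colon \BF \to \widehat{\BF}$ into it.

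First I would recall that, by Proposition A.5.4 in \cite{DR18-arxiv}, this bifunctor $\iota$ is automatically fully faithful on $1$- and $2$-morphisms, so that the only thing to check for biequivalence is essential surjectivity on objects: every object of $\widehat{\BF}$, that is, every separable algebra $A$ in $\BF$, must be equivalent in $\widehat{\BF}$ to $\iota(M)$ for some object $M$ of $\BF$. Now a separable algebra in the bicategory $\BF$ is precisely a separable algebra (in the sense of Definition \ref{def:alg_separable}) in the monoidal C$^*$-category $\BF(M,M)$ of Connes' $M$-$M$-bimodules, for some II$_1$ factor (sum) $M$; and $\BF(M,M)$ is a multitensor C$^*$-category (this is the content of the ``straightforward generalization'' alluded to, combined with Lemma 4.1 in \cite{GiYu23}). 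By the bicategorical analogue of Theorem \ref{thm:isomQsys_iff_sep} — proved by running the arguments of Lemma \ref{lem:sep_alg_sum_ind}, Lemma \ref{lem:sep_alg_unitary_module}, and Proposition \ref{prop:internal_C_*_Frob_alg} inside $\BF(M,M)$ rather than a fixed $\CC$ — the algebra $A$ is isomorphic, as an algebra, to a special C$^*$-Frobenius algebra $Q$ in $\BF(M,M)$, i.e. to a Q-system. An algebra isomorphism induces an equivalence of the associated objects in the idempotent completion (the two module categories of modules over $A$ and over $Q$ are equivalent, compatibly with the bicategory structure), so it suffices to realize Q-systems in $\BF$ as genuine objects.

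The final step is the standard ``Q-system completion'' fact for II$_1$ factors: a Q-system $Q$ in the bimodule category $\BF(M,M)$ — equivalently, a special C$^*$-Frobenius algebra, equivalently a finite-index unital $*$-subalgebra-type datum — is realized by a finite-index inclusion $M \subseteq N$ of II$_1$ factors (again a finite direct sum of such), with $Q \cong {}_M L^2(N)_M$ as a Connes' bimodule carrying its canonical algebra structure; this is precisely the bimodule/subfactor dictionary of \cite{Lon94}, \cite{LoRo97}, see also \cite{BKLR15}. Hence $\iota(N)$ is equivalent to the object of $\widehat{\BF}$ determined by $Q$, and therefore to the one determined by $A$. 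This shows $\iota$ is essentially surjective on objects, hence a biequivalence, so $\BF$ is idempotent complete.

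The main obstacle is bookkeeping rather than a genuinely new idea: one must check that the arguments behind Theorem \ref{thm:isomQsys_iff_sep} — in particular the decomposition into indecomposables (Lemma \ref{lem:sep_alg_sum_ind}), the unitarization of the module category (Lemma \ref{lem:sep_alg_unitary_module}, which in turn rests on Theorem 3.2 in \cite{CGGH23}), and the internal-hom construction (Proposition \ref{prop:internal_C_*_Frob_alg}) — all go through verbatim when $\CC$ is replaced by a hom-category $\BF(M,M)$ of a rigid semisimple C$^*$-bicategory, and that the resulting algebra isomorphism is tracked correctly through the equivalence $\widehat{\BF}(\,\cdot\,) \simeq \bRMod_{\BF(M,M)}(\,\cdot\,)$. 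Once this is granted, the subfactor realization of Q-systems supplies the objects, and essential surjectivity — hence idempotent completeness — follows.
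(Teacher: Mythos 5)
Your argument is correct and is essentially the paper's own (very terse) proof: reduce idempotent completeness to essential surjectivity of the canonical bifunctor via Proposition A.5.4 in \cite{DR18-arxiv}, use the bicategorical generalization of Theorem \ref{thm:isomQsys_iff_sep} applied in the multitensor C$^*$-category $\BF(M,M)$ to replace a separable algebra by a Q-system, and then realize that Q-system by a genuine object. The only cosmetic difference is bookkeeping of citations: the realization of Q-systems by finite direct sums of II$_1$ factors is exactly the content of Lemma 4.1 in \cite{GiYu23} that the paper invokes, whereas you credit it to the classical subfactor dictionary of \cite{Lon94}, \cite{LoRo97}, \cite{BKLR15}, which covers the factor case that the cited lemma extends to the direct-sum setting.
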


This result is also stated with a different but equivalent terminology in \cite{CHPJP22}. By Theorem \ref{thm:isomQsys_iff_sep}, at least for (rigid) semisimple C$^*$-bicategories, the terminology of \textit{Q-system completion} used in Definition 3.34 in \cite{CHPJP22} coincides with the previously mentioned idempotent completion of \cite{DR18-arxiv}.

\bigskip

\noindent\textbf{Acknowledgements.}
We thank Zheng Hao for insightful comments and Dave Penneys for informing us about a proof of Theorem \ref{thm:isomQsys_iff_sep} in the multifusion case that is to appear in \cite{CFHPS}. We also thank the referees for their suggestions and comments.

\bigskip


\def\cprime{$'$}
\begin{bibdiv}
\begin{biblist}

\bib{Abr99}{article}{
      author={Abrams, Lowell},
       title={Modules, comodules, and cotensor products over {F}robenius
  algebras},
        date={1999},
        ISSN={0021-8693,1090-266X},
     journal={J. Algebra},
      volume={219},
      number={1},
       pages={201\ndash 213},
         url={https://doi.org/10.1006/jabr.1999.7901},
      review={\MR{1707668}},
}

\bib{ADC19}{article}{
      author={Arano, Yuki},
      author={De~Commer, Kenny},
       title={Torsion-freeness for fusion rings and tensor {${\rm
  C}^*$}-categories},
        date={2019},
        ISSN={1661-6952,1661-6960},
     journal={J. Noncommut. Geom.},
      volume={13},
      number={1},
       pages={35\ndash 58},
         url={https://doi.org/10.4171/JNCG/322},
      review={\MR{3941472}},
}

\bib{AGT23-online}{article}{
      author={Adamo, Maria~Stella},
      author={Giorgetti, Luca},
      author={Tanimoto, Yoh},
       title={Wightman fields for two-dimensional conformal field theories with
  pointed representation category},
        date={2023},
     journal={Comm. Math. Phys.},
     volume={404},
     number={3},
     pages={1231\ndash 1273},
         url={https://doi.org/10.1007/s00220-023-04835-1},
	review={\MR{4677578}},
}

\bib{AMP23}{article}{
      author={Afzaly, Narjess},
      author={Morrison, Scott},
      author={Penneys, David},
       title={The classification of subfactors with index at most
  {$5\frac14$}},
        date={2023},
        ISSN={0065-9266,1947-6221},
     journal={Mem. Amer. Math. Soc.},
      volume={284},
      number={1405},
       pages={v+81},
         url={https://doi.org/10.1090/memo/1405},
      review={\MR{4565376}},
}

\bib{BKL15}{article}{
      author={Bischoff, Marcel},
      author={Kawahigashi, Yasuyuki},
      author={Longo, Roberto},
       title={Characterization of 2{D} rational local conformal nets and its
  boundary conditions: the maximal case},
        date={2015},
        ISSN={1431-0635},
     journal={Doc. Math.},
      volume={20},
       pages={1137\ndash 1184},
      review={\MR{3424476}},
}

\bib{BKLR15}{book}{
      author={Bischoff, Marcel},
      author={Kawahigashi, Yasuyuki},
      author={Longo, Roberto},
      author={Rehren, Karl-Henning},
       title={Tensor categories and endomorphisms of von {N}eumann algebras.
  with applications to quantum field theory},
      series={Springer Briefs in Mathematical Physics},
   publisher={Springer, Cham},
        date={2015},
      volume={3},
        ISBN={978-3-319-14300-2; 978-3-319-14301-9},
         url={http://dx.doi.org/10.1007/978-3-319-14301-9},
      review={\MR{3308880}},
}

\bib{BKLR16}{article}{
      author={Bischoff, Marcel},
      author={Kawahigashi, Yasuyuki},
      author={Longo, Roberto},
      author={Rehren, Karl-Henning},
       title={Phase {B}oundaries in {A}lgebraic {C}onformal {QFT}},
        date={2016},
        ISSN={0010-3616},
     journal={Comm. Math. Phys.},
      volume={342},
      number={1},
       pages={1\ndash 45},
         url={http://dx.doi.org/10.1007/s00220-015-2560-0},
      review={\MR{3455144}},
}

\bib{CGGH23}{article}{
      author={Carpi, Sebastiano},
      author={Gaudio, Tiziano},
      author={Giorgetti, Luca},
      author={Hillier, Robin},
       title={Haploid algebras in {$C^*$}-tensor categories and the
  {S}chellekens list},
        date={2023},
        ISSN={0010-3616},
     journal={Comm. Math. Phys.},
      volume={402},
      number={1},
       pages={169\ndash 212},
         url={https://doi.org/10.1007/s00220-023-04722-9},
      review={\MR{4616673}},
}

\bib{CFHPS}{article}{
      author={Chen, Quan},
      author={Ferrer, Giovanni},
      author={Hungar, Brett},
      author={Penneys, David},
      author={Sanford, Sean},
      title={Manifestly unitary higher Hilbert spaces},
      label={CFHPS}
      note={In preparation},
}

\bib{CHPJ24}{article}{
      author={Chen, Quan},
      author={Hern\'{a}ndez~Palomares, Roberto},
      author={Jones, Corey},
       title={{K}-theoretic classification of inductive limit actions of fusion categories on {AF}-algebras},
        date={2024},
     journal={Commun. Math. Phys.},
      volume={405},
      number={83},
       pages={1--52},
         url={https://doi.org/10.1007/s00220-024-04969-w},
      review={\MR{4717816}},
}

\bib{CHPJP22}{article}{
      author={Chen, Quan},
      author={Hern\'{a}ndez~Palomares, Roberto},
      author={Jones, Corey},
      author={Penneys, David},
       title={Q-system completion for {$\rm C^*$} 2-categories},
        date={2022},
        ISSN={0022-1236},
     journal={J. Funct. Anal.},
      volume={283},
      number={3},
       pages={Paper No. 109524},
         url={https://mathscinet.ams.org/mathscinet-getitem?mr=4419534},
      review={\MR{4419534}},
}

\bib{CKLW18}{article}{
      author={Carpi, Sebastiano},
      author={Kawahigashi, Yasuyuki},
      author={Longo, Roberto},
      author={Weiner, Mih\'{a}ly},
       title={From vertex operator algebras to conformal nets and back},
        date={2018},
        ISSN={0065-9266},
     journal={Mem. Amer. Math. Soc.},
      volume={254},
      number={1213},
       pages={vi+85},
         url={https://mathscinet.ams.org/mathscinet-getitem?mr=3796433},
      review={\MR{3796433}},
}

\bib{CKM17-arxiv}{article}{
      author={Creutzig, Thomas},
      author={Kanade, Shashank},
      author={McRae, Robert},
       title={Tensor categories for vertex operator superalgebra extensions},
        date={2017},
  note={\href{https://arxiv.org/abs/1705.05017}{https://arxiv.org/abs/1705.05017}
  (to appear in Mem. Amer. Math. Soc.)},
}

\bib{ChPe22}{article}{
      author={Chen, Quan},
      author={Penneys, David},
       title={Q-system completion is a 3-functor},
        date={2022},
        ISSN={1201-561X},
     journal={Theory Appl. Categ.},
      volume={38},
       pages={Paper No. 4, 101\ndash 134},
         url={https://doi.org/10.1002/num.22828},
      review={\MR{4369356}},
}

\bib{DMNO13}{article}{
      author={Davydov, Alexei},
      author={M{{\"u}}ger, Michael},
      author={Nikshych, Dmitri},
      author={Ostrik, Victor},
       title={The {W}itt group of non-degenerate braided fusion categories},
        date={2013},
        ISSN={0075-4102},
     journal={J. Reine Angew. Math.},
      volume={677},
       pages={135\ndash 177},
      review={\MR{3039775}},
}

\bib{DR18-arxiv}{article}{
      author={Douglas, Christopher~L.},
      author={Reutter, David~J.},
       title={{Fusion 2-categories and a state-sum invariant for 4-manifolds}},
        date={2018},
         url={https://arxiv.org/abs/1812.11933},
  note={\href{https://arxiv.org/abs/1812.11933}{https://arxiv.org/abs/1812.11933}},
}

\bib{DoRo89}{article}{
      author={Doplicher, Sergio},
      author={Roberts, John~E.},
       title={A new duality theory for compact groups},
        date={1989},
        ISSN={0020-9910},
     journal={Invent. Math.},
      volume={98},
      number={1},
       pages={157\ndash 218},
         url={http://dx.doi.org/10.1007/BF01388849},
      review={\MR{1010160 (90k:22005)}},
}

\bib{ENO05}{article}{
	author={Etingof, Pavel},
	author={Nikshych, Dmitri},
	author={Ostrik, Viktor},
	journal={Ann. of Math. (2)},
	number={2},
	pages={581\ndash 642},
	title={On fusion categories},
	volume={162},
	date={2005},
	review={\MR{2183279}},
}

\bib{EGNO15}{book}{
      author={Etingof, Pavel},
      author={Gelaki, Shlomo},
      author={Nikshych, Dmitri},
      author={Ostrik, Victor},
       title={Tensor categories},
      series={Mathematical Surveys and Monographs},
   publisher={American Mathematical Society, Providence, RI},
        date={2015},
      volume={205},
        ISBN={978-1-4704-2024-6},
      review={\MR{3242743}},
}

\bib{EvKaBook}{book}{
      author={Evans, David~E.},
      author={Kawahigashi, Yasuyuki},
       title={{Quantum symmetries on operator algebras}},
      series={{Oxford Mathematical Monographs}},
   publisher={The Clarendon Press Oxford University Press},
     address={New York},
        date={1998},
        ISBN={0-19-851175-2},
        note={Oxford Science Publications},
      review={\MR{1642584 (99m:46148)}},
}

\bib{EvGP23}{article}{
      author={Evington, Samuel},
      author={Pacheco, Sergio~Gir\'{o}n},
       title={Anomalous symmetries of classifiable {$\rm C^*$}-algebras},
        date={2023},
        ISSN={0039-3223,1730-6337},
     journal={Studia Math.},
      volume={270},
      number={1},
       pages={73\ndash 101},
         url={https://doi.org/10.4064/sm220117-25-6},
      review={\MR{4566007}},
}

\bib{FRS02}{article}{
      author={Fuchs, J\"{u}rgen},
      author={Runkel, Ingo},
      author={Schweigert, Christoph},
       title={{T{FT} construction of {RCFT} correlators. {I}. {P}artition
  functions}},
        date={2002},
        ISSN={0550-3213},
     journal={Nuclear Phys. B},
      volume={646},
      number={3},
       pages={353\ndash 497},
         url={http://dx.doi.org/10.1016/S0550-3213(02)00744-7},
      review={\MR{1940282 (2004c:81244)}},
}

\bib{FRS04}{article}{
      author={Fuchs, J\"{u}rgen},
      author={Runkel, Ingo},
      author={Schweigert, Christoph},
       title={T{FT} construction of {RCFT} correlators. {II}. {U}noriented
  world sheets},
        date={2004},
        ISSN={0550-3213},
     journal={Nuclear Phys. B},
      volume={678},
      number={3},
       pages={511\ndash 637},
         url={http://dx.doi.org/10.1016/j.nuclphysb.2003.11.026},
      review={\MR{2026879 (2005a:81197)}},
}

\bib{FRS04II}{article}{
      author={Fuchs, J\"{u}rgen},
      author={Runkel, Ingo},
      author={Schweigert, Christoph},
       title={T{FT} construction of {RCFT} correlators. {III}. {S}imple
  currents},
        date={2004},
        ISSN={0550-3213},
     journal={Nuclear Phys. B},
      volume={694},
      number={3},
       pages={277\ndash 353},
         url={http://dx.doi.org/10.1016/j.nuclphysb.2004.05.014},
      review={\MR{2076134 (2005e:81209)}},
}

\bib{FRS05}{article}{
      author={Fuchs, J\"{u}rgen},
      author={Runkel, Ingo},
      author={Schweigert, Christoph},
       title={T{FT} construction of {RCFT} correlators. {IV}. {S}tructure
  constants and correlation functions},
        date={2005},
        ISSN={0550-3213},
     journal={Nuclear Phys. B},
      volume={715},
      number={3},
       pages={539\ndash 638},
         url={http://dx.doi.org/10.1016/j.nuclphysb.2005.03.018},
      review={\MR{2137114 (2006g:81161)}},
}

\bib{Gan23-arxiv}{article}{
      author={Gannon, Terry},
       title={{E}xotic quantum subgroups and extensions of affine {L}ie algebra
  {VOAs} -- part {I}},
        date={2023},
  note={\href{https://arxiv.org/abs/2301.07287}{https://arxiv.org/abs/2301.07287}},
}

\bib{GL19}{article}{
      author={Giorgetti, Luca},
      author={Longo, Roberto},
       title={Minimal index and dimension for 2-{$C^*$}-categories with
  finite-dimensional centers},
        date={2019},
        ISSN={0010-3616},
     journal={Comm. Math. Phys.},
      volume={370},
      number={2},
       pages={719\ndash 757},
         url={https://doi.org/10.1007/s00220-018-3266-x},
      review={\MR{3994584}},
}

\bib{GLR85}{article}{
      author={Ghez, P.},
      author={Lima, R.},
      author={Roberts, J.~E.},
       title={{$W^\ast$}-categories},
        date={1985},
        ISSN={0030-8730},
     journal={Pacific J. Math.},
      volume={120},
      number={1},
       pages={79\ndash 109},
         url={http://projecteuclid.org/euclid.pjm/1102703884},
      review={\MR{808930}},
}

\bib{GS12}{article}{
      author={Grossman, Pinhas},
      author={Snyder, Noah},
       title={Quantum subgroups of the {H}aagerup fusion categories},
        date={2012},
        ISSN={0010-3616,1432-0916},
     journal={Comm. Math. Phys.},
      volume={311},
      number={3},
       pages={617\ndash 643},
         url={https://doi.org/10.1007/s00220-012-1427-x},
      review={\MR{2909758}},
}

\bib{Gui22}{article}{
      author={Gui, Bin},
       title={Q-systems and extensions of completely unitary vertex operator
  algebras},
        date={2022},
        ISSN={1073-7928},
     journal={Int. Math. Res. Not. IMRN},
      number={10},
       pages={7550\ndash 7614},
         url={https://doi.org/10.1093/imrn/rnaa300},
      review={\MR{4418715}},
}

\bib{Gio22}{article}{
	author = {Giorgetti, Luca},
	journal = {Internat. J. Math.},
	number = {5},
	pages = {Paper No. 2250037, 23},
	title = {A planar algebraic description of conditional expectations},
	volume = {33},
	date = {2022},
	review = {\MR{4413454}},
}

\bib{GiYu23}{article}{
      author={Giorgetti, Luca},
      author={Yuan, Wei},
       title={Realization of rigid {$\rm C^\ast$}-bicategories as bimodules
  over type {$\rm II_1$} von {N}eumann algebras},
        date={2023},
        ISSN={0001-8708,1090-2082},
     journal={Adv. Math.},
      volume={415},
       pages={Paper No. 108886, 27},
         url={https://doi.org/10.1016/j.aim.2023.108886},
      review={\MR{4543076}},
}

\bib{HaagBook}{book}{
      author={Haag, Rudolf},
       title={{Local quantum physics}},
   publisher={Texts and Monographs in Physics, Springer-Verlag, Berlin},
        date={1996},
}

\bib{HuKo07}{article}{
      author={Huang, Yi-Zhi},
      author={Kong, Liang},
       title={Full field algebras},
        date={2007},
        ISSN={0010-3616},
     journal={Comm. Math. Phys.},
      volume={272},
      number={2},
       pages={345\ndash 396},
         url={https://doi.org/10.1007/s00220-007-0224-4},
      review={\MR{2300247}},
}

\bib{HKL15}{article}{
      author={Huang, Yi-Zhi},
      author={Kirillov, Alexander, Jr.},
      author={Lepowsky, James},
       title={Braided tensor categories and extensions of vertex operator
  algebras},
        date={2015},
        ISSN={0010-3616},
     journal={Comm. Math. Phys.},
      volume={337},
      number={3},
       pages={1143\ndash 1159},
         url={https://doi.org/10.1007/s00220-015-2292-1},
      review={\MR{3339173}},
}

\bib{HPT16}{article}{
      author={Henriques, Andr\'{e}},
      author={Penneys, David},
      author={Tener, James},
       title={Categorified trace for module tensor categories over braided
  tensor categories},
        date={2016},
        ISSN={1431-0635,1431-0643},
     journal={Doc. Math.},
      volume={21},
       pages={1089\ndash 1149},
      review={\MR{3578212}},
}

\bib{JonPA}{article}{
      author={Jones, V. F.~R.},
       title={Planar algebras, {I}},
        date={2021},
        ISSN={1171-6096},
     journal={New Zealand J. Math.},
      volume={52},
       pages={1\ndash 107},
         url={https://doi.org/10.53733/172},
      review={\MR{4374438}},
}

\bib{Jon83}{article}{
      author={Jones, V. F.~R.},
       title={{Index for subfactors}},
        date={1983},
        ISSN={0020-9910},
     journal={Invent. Math.},
      volume={72},
      number={1},
       pages={1\ndash 25},
         url={http://dx.doi.org/10.1007/BF01389127},
      review={\MR{696688 (84d:46097)}},
}

\bib{KacBook}{book}{
      author={Kac, Victor},
       title={Vertex algebras for beginners},
     edition={Second},
      series={University Lecture Series},
   publisher={American Mathematical Society, Providence, RI},
        date={1997},
      volume={10},
        ISBN={0-8218-1396-X},
         url={https://doi.org/10.1090/ulect/010},
      review={\MR{1651389}},
}

\bib{KO02}{article}{
      author={Kirillov, Alexander, Jr.},
      author={Ostrik, Viktor},
       title={{On a {$q$}-analogue of the {M}c{K}ay correspondence and the
  {ADE} classification of {$\textrm{sl}_2$} conformal field theories}},
        date={2002},
        ISSN={0001-8708},
     journal={Adv. Math.},
      volume={171},
      number={2},
       pages={183\ndash 227},
         url={http://dx.doi.org/10.1006/aima.2002.2072},
      review={\MR{1936496 (2003j:17019)}},
}

\bib{Kon07}{article}{
      author={Kong, Liang},
       title={Full field algebras, operads and tensor categories},
        date={2007},
        ISSN={0001-8708},
     journal={Adv. Math.},
      volume={213},
      number={1},
       pages={271\ndash 340},
         url={https://doi.org/10.1016/j.aim.2006.12.007},
      review={\MR{2331245}},
}

\bib{KYZ21}{article}{
      author={Kong, Liang},
      author={Yuan, Wei},
      author={Zheng, Hao},
       title={Pointed {D}rinfeld center functor},
        date={2021},
        ISSN={0010-3616,1432-0916},
     journal={Comm. Math. Phys.},
      volume={381},
      number={3},
       pages={1409\ndash 1443},
         url={https://doi.org/10.1007/s00220-020-03922-x},
      review={\MR{4218686}},
}

\bib{KoZh17-arxiv}{article}{
      author={Kong, Liang},
      author={Zheng, Hao},
       title={Semisimple and separable algebras in multi-fusion categories},
        date={2017},
  note={\href{https://arxiv.org/abs/1706.06904}{https://arxiv.org/abs/1706.06904}},
}

\bib{KZ18}{article}{
      author={Kong, Liang},
      author={Zheng, Hao},
       title={The center functor is fully faithful},
        date={2018},
        ISSN={0001-8708,1090-2082},
     journal={Adv. Math.},
      volume={339},
       pages={749\ndash 779},
         url={https://doi.org/10.1016/j.aim.2018.09.031},
      review={\MR{3866911}},
}

\bib{Lon90}{article}{
      author={Longo, Roberto},
       title={{Index of subfactors and statistics of quantum fields. II.
  Correspondences, braid group statistics and Jones polynomial}},
        date={1990},
     journal={Comm. Math. Phys.},
      volume={130},
       pages={285\ndash 309},
}

\bib{Lon94}{article}{
      author={Longo, Roberto},
       title={{A duality for {H}opf algebras and for subfactors. {I}}},
        date={1994},
        ISSN={0010-3616},
     journal={Comm. Math. Phys.},
      volume={159},
      number={1},
       pages={133\ndash 150},
         url={http://projecteuclid.org/getRecord?id=euclid.cmp/1104254494},
      review={\MR{1257245 (95h:46097)}},
}

\bib{LoRe95}{article}{
      author={Longo, Roberto},
      author={Rehren, Karl-Henning},
       title={{Nets of subfactors}},
        date={1995},
     journal={Rev. Math. Phys.},
      volume={7},
       pages={567\ndash 597},
}

\bib{LoRo97}{article}{
      author={Longo, R.},
      author={Roberts, J.~E.},
       title={{A theory of dimension}},
        date={1997},
        ISSN={0920-3036},
     journal={$K$-Theory},
      volume={11},
      number={2},
       pages={103\ndash 159},
         url={http://dx.doi.org/10.1023/A:1007714415067},
      review={\MR{1444286 (98i:46065)}},
}

\bib{Mac98}{book}{
      author={Mac~Lane, Saunders},
       title={Categories for the working mathematician},
     edition={Second},
      series={Graduate Texts in Mathematics},
   publisher={Springer-Verlag, New York},
        date={1998},
      volume={5},
        ISBN={0-387-98403-8},
      review={\MR{1712872 (2001j:18001)}},
}

\bib{Mue03-I}{article}{
      author={M{\"u}ger, Michael},
       title={From subfactors to categories and topology. {I}. {F}robenius
  algebras in and {M}orita equivalence of tensor categories},
        date={2003},
        ISSN={0022-4049},
     journal={J. Pure Appl. Algebra},
      volume={180},
      number={1-2},
       pages={81\ndash 157},
         url={http://dx.doi.org/10.1016/S0022-4049(02)00247-5},
      review={\MR{1966524 (2004f:18013)}},
}

\bib{Mue10tour}{article}{
      author={M{\"u}ger, Michael},
       title={Tensor categories: a selective guided tour},
        date={2010},
        ISSN={0041-6932},
     journal={Rev. Un. Mat. Argentina},
      volume={51},
      number={1},
       pages={95\ndash 163},
      review={\MR{2681261 (2011f:18007)}},
}

\bib{NeTuBook}{book}{
      author={Neshveyev, Sergey},
      author={Tuset, Lars},
       title={Compact quantum groups and their representation categories},
      series={Cours Sp\'{e}cialis\'{e}s [Specialized Courses]},
   publisher={Soci\'{e}t\'{e} Math\'{e}matique de France, Paris},
        date={2013},
      volume={20},
        ISBN={978-2-85629-777-3},
         url={https://mathscinet.ams.org/mathscinet-getitem?mr=3204665},
      review={\MR{3204665}},
}

\bib{NY18}{article}{
      author={Neshveyev, Sergey},
      author={Yamashita, Makoto},
       title={Categorically {M}orita equivalent compact quantum groups},
        date={2018},
        ISSN={1431-0635},
     journal={Doc. Math.},
      volume={23},
       pages={2165\ndash 2216},
         url={https://mathscinet.ams.org/mathscinet-getitem?mr=3933035},
      review={\MR{3933035}},
}

\bib{Ocn02}{incollection}{
      author={Ocneanu, Adrian},
       title={The classification of subgroups of quantum {${\rm SU}(N)$}},
        date={2002},
   booktitle={Quantum symmetries in theoretical physics and mathematics
  ({B}ariloche, 2000)},
      series={Contemp. Math.},
      volume={294},
   publisher={Amer. Math. Soc., Providence, RI},
       pages={133\ndash 159},
         url={https://doi.org/10.1090/conm/294/04972},
      review={\MR{1907188}},
}

\bib{Ocn88}{incollection}{
      author={Ocneanu, Adrian},
       title={Quantized groups, string algebras and {G}alois theory for
  algebras},
        date={1988},
   booktitle={Operator algebras and applications, {V}ol.\ 2},
      series={London Math. Soc. Lecture Note Ser.},
      volume={136},
   publisher={Cambridge Univ. Press, Cambridge},
       pages={119\ndash 172},
      review={\MR{996454 (91k:46068)}},
}

\bib{Ost03}{article}{
      author={Ostrik, Victor},
       title={{Module categories, weak {H}opf algebras and modular
  invariants}},
        date={2003},
        ISSN={1083-4362},
     journal={Transform. Groups},
      volume={8},
      number={2},
       pages={177\ndash 206},
         url={http://dx.doi.org/10.1007/s00031-003-0515-6},
      review={\MR{1976459 (2004h:18006)}},
}

\bib{Pop90}{article}{
      author={Popa, S.},
       title={Classification of subfactors: the reduction to commuting
  squares},
        date={1990},
        ISSN={0020-9910},
     journal={Invent. Math.},
      volume={101},
      number={1},
       pages={19\ndash 43},
         url={https://doi.org/10.1007/BF01231494},
      review={\MR{1055708}},
}

\bib{Pop95}{article}{
      author={Popa, Sorin},
       title={An axiomatization of the lattice of higher relative commutants of
  a subfactor},
        date={1995},
        ISSN={0020-9910},
     journal={Invent. Math.},
      volume={120},
      number={3},
       pages={427\ndash 445},
         url={http://dx.doi.org/10.1007/BF01241137},
      review={\MR{1334479}},
}

\bib{Reu23}{article}{
      author={Reutter, David},
       title={Uniqueness of unitary structure for unitarizable fusion
  categories},
        date={2023},
        ISSN={0010-3616,1432-0916},
     journal={Comm. Math. Phys.},
      volume={397},
      number={1},
       pages={37\ndash 52},
         url={https://doi.org/10.1007/s00220-022-04425-7},
      review={\MR{4538281}},
}

\bib{RFFS07}{incollection}{
      author={Runkel, Ingo},
      author={Fjelstad, Jens},
      author={Fuchs, J\"{u}rgen},
      author={Schweigert, Christoph},
       title={Topological and conformal field theory as {F}robenius algebras},
        date={2007},
   booktitle={Categories in algebra, geometry and mathematical physics},
      series={Contemp. Math.},
      volume={431},
   publisher={Amer. Math. Soc., Providence, RI},
       pages={225\ndash 247},
         url={https://doi.org/10.1090/conm/431/08275},
      review={\MR{2342831}},
}

\bib{TV17}{book}{
	author = {Turaev, Vladimir},
	author = {Virelizier, Alexis},
	publisher = {Birkh\"{a}user/Springer, Cham},
	series = {Progress in Mathematics},
	title = {Monoidal categories and topological field theory},
	volume = {322},
	year = {2017},
	review = {\MR{3674995}},
}

\bib{Yam04FrA}{incollection}{
      author={Yamagami, Shigeru},
       title={Frobenius algebras in tensor categories and bimodule extensions},
        date={2004},
   booktitle={Galois theory, {H}opf algebras, and semiabelian categories},
      series={Fields Inst. Commun.},
      volume={43},
   publisher={Amer. Math. Soc., Providence, RI},
       pages={551\ndash 570},
         url={https://doi.org/10.1090/fic/043},
      review={\MR{2075605}},
}

\end{biblist}
\end{bibdiv}

\end{document}